\def\Var{\mathop {\fam0 Var}\nolimits}
\def\Pois{\mathrm{Pois}}
\def\Lie{\mathrm{Lie}}
\def\As {\mathop {\fam0 As }\nolimits}
\def\GD {\mathop {\fam0 GD }\nolimits}
\def\SGD {\mathop {\fam0 SGD }\nolimits}
\def\SNP {\mathop {\fam0 SNP }\nolimits}
\def\Com{\mathrm{Com}}
\def\Nov{\mathrm{Nov}}
\def\RNov{\mathrm{RNov}}
\def\BiCom{\mathrm{BiCom}}
\def\Der{\mathrm{Der}}
\def\wt{\mathop {\fam 0 wt}\nolimits}
\newtheorem{theorem}{Theorem}
\newtheorem{lemma}{Lemma}
\newtheorem{proposition}{Proposition}
\newtheorem{corollary}{Corollary}
\theoremstyle{definition}
\newtheorem{definition}{Definition}
\newtheorem{remark}{Remark}
\newtheorem{example}{Example}
\title[Gelfand--Dorfmann algebras]{Gelfand--Dorfman algebras, derived identities, and the Manin product of operads}
\author[P.S. Kolesnikov, B. Sartayev, A. Orazgaliev]{P.S. Kolesnikov$^{1)}$, B. Sartayev$^{2)}$, A. Orazgaliev$^{3)}$}
\address{${}^{1)}$ Sobolev Institute of Mathematics, Novosibirsk, Russia}
\address{$^{2)}$ Suleyman Demirel University, Kaskelen, Kazakhstan}
\address{$^{3)}$ Al-Farabi Kazakh National University, Almaty, Kazakhstan}
\begin{document}

\begin{abstract}
Gelfand--Dorfman bialgebras (GD-algebras) are nonassociative systems with two bilinear operations satisfying a series of identities 
that express Hamiltonian property of an operator in the formal calculus of variations.
The paper is devoted to the study of GD-algebras related with differential Poisson algebras. As a byproduct, 
we obtain a general description of identities that hold for operations $a\succ b  = d(a)b$ and $a\prec b = ad(b)$ 
on a (non-associative) differential algebra with a derivation~$d$.
\end{abstract}

\keywords{Differential algebra, Poisson algebra, Identity, Operad, Gelfand--Dorfman algebra}
\subjclass[2010]{
17B63, 
37K30, 
08B20 
}

\maketitle

\section{Introduction}

A series of non-associative structures related with the formal calculus of variations appeared in 
\cite{GD83}. These structures are nowadays known as Novikov algebras  
(they were independently 
introduced in \cite{BN85} as a tool for classification of linear Poisson brackets of hydrodynamic type), 
Novikov--Poisson algebras, and Gelfand--Dorfman bialgebras. 
The latter class of systems was initially invented in \cite{GD83} as a source of Hamiltonian operators: the structure constants of a Gelfand--Dorfman bialgebra may be used as coefficients of a differential operator.  

By definition, 
a linear space $A$ equipped with one bilinear operation 
$\circ: A\times A\rightarrow A$ is said to be a {\em Novikov algebra}
(the term was proposed in \cite{Osborn}) if it satisfies 
the following identities:
\begin{gather}
(a\circ b)\circ c-a\circ (b\circ c)=(b\circ a)\circ c-b\circ (a\circ c), \label{eq:LSymm} \\
(a\circ b)\circ c=(a\circ c)\circ b. \label{eq:RComm}
\end{gather}
A {\em Gelfand---Dorfmann bialgebra} \cite{Xu2000} is a system  $(A,\circ,[\cdot,\cdot])$
with two bilinear operations such that $(A,\circ)$ is a Novikov algebra, $(A,[\cdot,\cdot])$ 
is a Lie algebra, and the following additional identity holds:
\begin{equation}\label{eq:GD-1} 
[a,b\circ c]-[c,b\circ a]+[b,a]\circ c-[b,c]\circ a-b\circ[a,c]=0.
\end{equation}
In order to avoid a collision with the well-known notion of a bialgebra as an algebra equipped
with a coproduct, we will use the term {\em GD-algebra} for a Gelfand---Dorfman bialgebra.

It turned out \cite{Xu2000} (see also \cite{HongWu17}) that GD-algebras 
are closely related with conformal Lie algebras appeared in the theory of vertex operators \cite{Kac1998}. 
Conformal algebras and their generalizations (pseudo-algebras)
also turn to be useful 
for the classification of Poisson brackets of hydrodynamic type \cite{BDK}. 
It worth mentioning that Poisson algebras are also closely related with 
representations of Lie conformal algebras \cite{Kol:PoisPrepr}. 

The following statement provides a series of examples of GD-algebras. 
Recall that a Poisson algebra is a linear space $V$ equipped with two bilinear operations, i.e., a system  
$(V,\cdot,\{\cdot,\cdot\})$, where $(V,\cdot)$ is an associative and commutative algebra, 
$(V,\{\cdot,\cdot\})$ is a Lie algebra, and the 
following {\em Leibniz identity} holds:
\[
\{x,y z\}=\{x,y\} z+y\{x,z\}.
\]

\begin{theorem}\label{thm:th1}
Let $V$ be a Poisson algebra with a derivation $d\in \mathrm{Der}\, V$ 
relative to the both products.
Define a new operation $\circ $ on $V$ in the following way:
\begin{equation}\label{eq:Pois_to_GD}
 x\circ y=x d(y).
\end{equation}
Then $(V, \circ , \{\cdot , \cdot \})$ is a GD-algebra.
\end{theorem}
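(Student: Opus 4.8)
The plan is to verify directly the defining axioms of a GD-algebra for the pair $(\circ,\{\cdot,\cdot\})$ on $V$, with $\circ$ given by \eqref{eq:Pois_to_GD} and $\{\cdot,\cdot\}$ the original Poisson bracket. Since $(V,\{\cdot,\cdot\})$ is a Lie algebra by hypothesis, only two things need to be checked: that $(V,\circ)$ is a Novikov algebra, i.e.\ satisfies \eqref{eq:LSymm} and \eqref{eq:RComm}, and that the compatibility identity \eqref{eq:GD-1} holds.

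First I would handle the Novikov axioms using only associativity and commutativity of the product together with the Leibniz rule $d(xy)=d(x)y+xd(y)$. Expanding, $(a\circ b)\circ c=a\,d(b)\,d(c)$, which is symmetric in $b$ and $c$; this is \eqref{eq:RComm}. For \eqref{eq:LSymm} one computes $(a\circ b)\circ c-a\circ(b\circ c)=-\,a\,b\,d^2(c)$, and the right-hand side is symmetric under the transposition of $a$ and $b$ because the product is commutative, which is exactly what \eqref{eq:LSymm} asserts.

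Next I would substitute $x\circ y=xd(y)$ into the five terms of \eqref{eq:GD-1} and expand each using the Leibniz identity $\{x,yz\}=\{x,y\}z+y\{x,z\}$ together with antisymmetry of the bracket. This produces the monomials $\{a,b\}d(c)$, $\{b,c\}d(a)$, $b\{a,d(c)\}$, $b\{d(a),c\}$, and $b\,d\{a,c\}$; the occurrences of $\pm\{a,b\}d(c)$ and of $\pm\{b,c\}d(a)$ cancel against each other, and the remaining combination $b\bigl(\{a,d(c)\}+\{d(a),c\}-d\{a,c\}\bigr)$ vanishes precisely because $d$ is a derivation of the bracket, $d\{x,y\}=\{d(x),y\}+\{x,d(y)\}$. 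Hence \eqref{eq:GD-1} holds and $(V,\circ,\{\cdot,\cdot\})$ is a GD-algebra.

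The whole argument is elementary, and the only real obstacle is bookkeeping: applying antisymmetry and the derivation rule consistently to the mixed terms $\{x,d(y)\}$ while tracking signs. It is worth noting in passing that the Jacobi identity of the Poisson bracket plays no role in deriving \eqref{eq:GD-1} — only bilinearity, antisymmetry, the Leibniz rule, and $d\in\Der V$ are used — while the Leibniz identity of the Poisson algebra is exactly what ties the two operations together.
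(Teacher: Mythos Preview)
Your proof is correct and follows essentially the same direct-verification approach as the paper: both check the Novikov axioms for $\circ$ (the paper simply cites \cite{BN85, GD83} for this, while you spell out the short computation) and then expand the left-hand side of \eqref{eq:GD-1} via the Leibniz rule and antisymmetry to see that everything cancels once $d$ is a derivation of the bracket. Your closing remark that the Jacobi identity is not actually used in deriving \eqref{eq:GD-1} is a nice observation not made explicit in the paper.
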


The proof is straightforward.
It is well-known \cite{BN85, GD83} that $x\circ y=x d(y)$ satisfies \eqref{eq:LSymm} and \eqref{eq:RComm}. It remains to show that 
\eqref{eq:GD-1} holds. Indeed, evaluate the left-hand side of \eqref{eq:GD-1} 
in a differential Poisson algebra:
$[u,w\circ v]-[v,w\circ u]+[w,u]\circ v-[w,v]\circ u-w\circ[u,v]
=\{u,wd(v)\}-\{v,wd(u)\}+\{w,u\}d(v)-\{w,v\}d(u)-wd(\{u,v\})
=\{u,w\}d(v)+\{u,d(v)\}w-\{v,w\}d(u)-\{v,d(u)\}w+\{w,u\}d(v)-\{w,v\}d(u)-w\{d(u),v\}
-w\{u,d(v)\}=0$.\qed

Let us call a GD-algebra $A$ {\em special} if it can be embedded into a differential Poisson algebra 
with operations $\{\cdot , \cdot\}$ and $\circ $ given by \eqref{eq:Pois_to_GD}. 
The class of all homomorphic images of all special GD-algebras 
is a variety (i.e., a class of algebras which is closed with respect to homomorphic images, subalgebras, and 
Cartesian products). Let us denote this variety by $\SGD$. 
The main purpose of this paper is to describe a way
for finding the defining identities of $\SGD$. 

In order to solve that problem, we need to study differential algebras and 
answer the following question which is of independent interest. 
Suppose $A$ is a (non-associative) algebra with a derivation~$d$. Introduce the following 
new binary operations on $A$: 
\begin{equation}\label{eq:SucPrec}
 a\succ b = d(a)b, \quad a\prec b = a d(b), \quad a,b\in A.
\end{equation}
Assuming $A$ satisfies a family of polynomial identities, what can be said about identities on 
$A^{(d)}=(A, \succ, \prec )$? 

For example, if $A$ is associative and commutative then 
$a\succ b = b\prec a$ and the operation $\prec $ satisfies \eqref{eq:LSymm} and \eqref{eq:RComm}. 
It was shown in \cite{DzhLowf} that there are no more independent identities that hold on $A^{(d)}$ 
for all associative commutative algebras $A$ and for all  their derivations~$d$.
Moreover, it was proved in \cite{BCZ-NP} that every Novikov algebra may be embedded into 
a differential associative commutative algebra. 

We find an explicit answer to the question about identities of $A^{(d)}$. 
It turns out 
that, given a variety $\Var $ defined by multilinear identities (multilinear variety, for short),
 the systems $A^{(d)}$, $A\in \Var $, $d\in \Der (A)$, generate a variety 
governed by the operad $\Var\circ \Nov$, 
where $\Nov $ is the operad of Novikov algebras and $\circ $ stands for the Manin white product of 
operads \cite{GK94, LodVal08}. Hereinafter we identify notations for multilinear varieties and the 
corresponding operads.

If $\Var $ is a variety of algebras 
then the free algebra in $\Var $ generated by a set $X$ is denoted 
$\Var \<X\>$. By $\Var\Der $ we denote the class of  all differential $\Var $-algebras with one derivation (this is a variety of algebraic systems 
with one additional unary operation $d$ in the language).
The free algebra in $\Var\Der $ generated by a set $X$ 
is denoted $\Var\Der\<X,d\>$.
We will often need to know the structure of the free differential $\Var $-algebra. 
It can be easily derived in general (c.f. \cite{GuoKeig08} for associative algebra case) that the following statement holds.

\begin{lemma}\label{lem:VarDerFree}
For a multilinear variety $\Var$, the
free differential $\Var$-algebra generated by a set $X$ 
with one derivation $d$ coincides with
$\Var\<d^\omega X\>$, where $d^\omega X = \{d^s(x) \mid s\ge 0, x\in X\}$.
\end{lemma}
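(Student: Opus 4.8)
\medskip
\noindent\emph{Proof idea.}
The plan is to endow the free $\Var$-algebra $\Var\<d^\omega X\>$ with a derivation and then check directly that the resulting object of $\Var\Der$ has the universal property defining $\Var\Der\<X,d\>$; uniqueness of free objects then yields the asserted identification.

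First I would recall that $\Var\<Y\>$ is the quotient of the absolutely free algebra $\Phi\<Y\>$ in the signature of $\Var$ by the verbal ideal $T_\Var(Y)$ generated by all substitutions of elements of $\Phi\<Y\>$ into the defining identities of $\Var$. Any set map $Y\to\Phi\<Y\>$ extends uniquely to a derivation of $\Phi\<Y\>$ by expanding along the operations with the Leibniz rule (for an $n$-ary operation $\mu$ one sets $d(\mu(u_1,\dots,u_n))=\sum_i \mu(u_1,\dots,d(u_i),\dots,u_n)$); I would take the map $d^s(x)\mapsto d^{s+1}(x)$ and call the resulting derivation $d$. The crucial point, and the one I expect to be the main obstacle, is to show that $d$ descends to $\Var\<Y\>$, i.e.\ that $T_\Var(Y)$ is a differential ideal. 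This is exactly where multilinearity of $\Var$ enters: applying $d$ to an evaluation of a multilinear identity $f(x_1,\dots,x_n)=0$ produces, by the Leibniz rule, the sum $\sum_i f(x_1,\dots,d(x_i),\dots,x_n)$ of evaluations of the same identity $f$, hence again an element of $T_\Var(Y)$. Taking $Y=d^\omega X$ thus yields a well-defined object $(\Var\<d^\omega X\>,d)\in\Var\Der$, together with the canonical inclusion $\iota\colon X\hookrightarrow \Var\<d^\omega X\>$, $x\mapsto d^0(x)$.

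Next I would verify the universal property. Given $(B,d_B)\in\Var\Der$ and a map $\varphi\colon X\to B$, set $\psi(d^s(x))=d_B^s(\varphi(x))$ and use freeness of $\Var\<d^\omega X\>$ in the variety $\Var$ to extend $\psi$ uniquely to a $\Var$-homomorphism $\bar\varphi\colon \Var\<d^\omega X\>\to B$. It then remains to check $\bar\varphi\circ d=d_B\circ\bar\varphi$. Both sides are $\bar\varphi$-derivations, i.e.\ linear maps $g$ with $g(\mu(u_1,\dots,u_n))=\sum_i \mu(\bar\varphi(u_1),\dots,g(u_i),\dots,\bar\varphi(u_n))$ for every operation $\mu$; once more by multilinearity, such a map on the free algebra $\Var\<d^\omega X\>$ is uniquely determined by its values on the generators $d^\omega X$. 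Since on $d^s(x)$ both maps equal $d_B^{s+1}(\varphi(x))$, they coincide, so $\bar\varphi$ is a morphism of $\Var\Der$ with $\bar\varphi\circ\iota=\varphi$. Uniqueness is immediate: any such morphism must send $d^s(x)$ to $d_B^s(\varphi(x))$, hence is forced on all generators of $\Var\<d^\omega X\>$.

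Therefore $(\Var\<d^\omega X\>,d)$ satisfies the defining universal property of $\Var\Der\<X,d\>$, and the two are canonically isomorphic. The routine parts are the two structural inductions (that the Leibniz extension of a map on generators exists on $\Phi\<Y\>$, and that a $\bar\varphi$-derivation is pinned down by its values on free generators); the hypothesis is used essentially only in the claim that the verbal ideal is closed under $d$, for which multilinearity is both used and needed.
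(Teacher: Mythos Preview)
Your proposal is correct and follows essentially the same approach as the paper: extend $d^s(x)\mapsto d^{s+1}(x)$ to a derivation of the free magmatic algebra via the Leibniz rule, observe that multilinearity forces the T-ideal $T_{\Var}(d^\omega X)$ to be $d$-invariant, and conclude that the quotient is the free object. The paper compresses the universal-property check into a single sentence (``all relations from $T_{\Var}(d^\omega X)$ hold in every differential $\Var$-algebra''), whereas you spell out the construction of $\bar\varphi$ and the verification that $\bar\varphi\circ d=d_B\circ\bar\varphi$; this extra care is welcome but not a different argument.
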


Indeed, consider the free nonassociative (magmatic) algebra $M\<d^\omega X\>$ and define a linear map 
$d: F(X;d)\to F(X;d)$ in such a way that $d(d^s(x))=d^{s+1}(x)$, $d(uv)=d(u)v+ud(v)$.
Denote by $T_{\Var}(X)$ 
the T-ideal of all identities in a set of variables $X$ that hold on $\Var $. 

Since $\Var $ is defined by multilinear identities, the T-ideal $T_{\Var} (d^\omega X)$ of $M\< d^\omega X\>$
is $d$-invariant:
\[
 d(f(d^{s_1}(x_1),\dots, d^{s_n}(x_n)) = \sum\limits_{i=1}^n f(d^{s_1}(x_1),\dots , d^{s_i+1}(x_i), \dots , d^{s_n}(x_n)).
\]
Hence, $U=F(X;d)/T_{\Var}(d^\omega X)$ is a $\Var $-algebra with a derivation~$d$.
Since all relations from $T_{\Var}(d^\omega X)$ hold in every differential $\Var$-algebra, 
$U$ is isomorphic to the free 
differential $\Var$-algebra. \qed

\begin{remark}
In Lemma \ref{lem:VarDerFree}, a derivation $d$ 
may be replaced with a {\em generalized derivation}, 
i.e., a linear map $D$ such that 
\[
D(xy)=D(x)y+xD(y)+\lambda xy,
\]
where $\lambda $ is a fixed scalar in $\Bbbk $. 
Indeed, it is enough to note that
 $d=D-\lambda \mathrm{id}$ is an ordinary derivation.
\end{remark}

In order to study the relations between GD-algebras and differential Poisson algebras we need 
more general settings. Namely, given an algebra $A$ with a derivation $d$, consider 
the system $A^{(d*)}=(A, \succ, \prec , \cdot )$, where $\succ $, $\prec $ are defined 
by \eqref{eq:SucPrec}, and $\cdot $ is the initial multiplication on~$A$.
How to describe the identities that hold on $A^{(d*)}$ provided that we know 
the identities of $A$? 
It turns out that the answer involves the operad $\GD$ of GD-algebras. 
Namely, 
for a multilinear variety $\Var $, the systems 
$A^{(d*)}$, $A\in \Var $, $d\in \Der (A)$, generate a variety governed by 
$\Var \circ \GD^!$, where $\GD^!$ is the Koszul-dual of the operad $\GD $.


The paper is organized as follows. 
In Section \ref{sec:DualGD}, 
we calculate the operad 
$\GD^!$ and describe the free $\GD^!$-algebra. 
In fact, the variety of $\GD^!$-algebras is a proper sub-variety of the class of Novikov--Poisson algebras. 
The defining identities of $\GD^!$ were earlier obtained in \cite{BCZ-NP}, 
but it was assumed in \cite{BCZ-NP} that a Novikov--Poisson algebra has an identity element. 
We can not use this assumption for our purpose, so we describe the free $\GD^!$-algebra 
explicitly and prove that it can be embedded into
the free differential associative and commutative algebra.

In Section \ref{sec:Derived}, we define (generalized) derived  identities of a multilinear variety $\Var $ 
of algebras with binary operations as those identities that hold on all $A^{(d)}$ (resp., $A^{(d*)}$) 
for all $A\in \Var $ and for all $d\in \Der (A)$. Denote the operads defined by (generalized) derived 
identities by $D\Var $ (resp., $D^*\Var $). Then we prove 
\[
 D\Var = \Var\circ \Nov,\quad D^*\Var = \Var \circ \GD^!.
\]

In Section \ref{sec:SpecialGD}, we describe the linear basis of the free special GD-algebra $\SGD\<X\>$
in terms of the enveloping differential Poisson  algebra. This result generalizes what was done 
in \cite{DzhLowf}: here we have to add one more operation $[\cdot , \cdot ]$ 
into consideration. 

Section \ref{sec:Ident} is devoted to the description of defining relations of the operad SGD governing 
the variety generated by all special GD-algebras.
It is easy to see that the operad SGD is a sub-operad of $D^*\Pois $ generated by 
a proper subspace of binary operations. We find the entire family of defining relations 
for $D^*\Pois $, so all defining identities of SGD occur among their corollaries. 
In particular, we are interested in 
{\em special identities} of GD-algebras, i.e., 
those identities that hold on  all SGD-algebras but do not hold on the class of all GD-algebras.
There are no special identities of degree 3, but we can find two independent special 
identities of degree 4. The complete list of special identities of GD-algebras remains
unknown.

\section{Dual Gelfand---Dorfmann algebras}\label{sec:DualGD}

Suppose $\Var $ is a multilinear variety of algebras.
Fix a countable set of variables 
$X=\{x_1,x_2,\dots \}$ and denote 
\[
 \Var(n) = M_n(X)/M_n(X)\cap T_{\Var}(X), 
\]
where $M_n(X)$ is the space of all (non-associative) multilinear polynomials of degree $n$ 
in $x_1,\dots, x_n$, $T_{\Var}(X)$ is the T-ideal of all identities that hold in $\Var $. 

The collection of spaces $(\Var(n))_{n\ge 1}$ forms a symmetric operad relative to the natural 
composition rule and symmetric group action (see, e.g., \cite{LodVal08}). We will denote this operad by the same symbol $\Var $. 
The operad $\Var $ is a {\em binary} one, i.e., it is generated (as a symmetric operad) 
by the elements of $\Var(2)$. In the generic case of algebras with one binary operation of multiplication, 
$\Var(2)$ is spanned by $x_1x_2$ and $x_2x_1$.

The variety of GD-algebras is defined by identities of degree 2 or 3. 
Therefore, the corresponding operad is a 
{\em quadratic} one \cite{GK94}. 
Let us compute the Koszul dual operad $\GD^!$.

Denote two operations of a GD-algebra by $\mu$ and $\nu$, namely,
$\mu =[x_1 , x_2]$, 
$\nu = x_1 \circ x_2$.
Then 
$\GD(2) = \mathrm{Span}\{\mu,\nu,\nu^{(12)}\}$, 
where $\nu_2^{(12)} = x_2\circ x_1$.

As in \cite{GK94}, the $S_3$-space of multilinear terms of degree 3 is identified with
\[
 O_3 = \Bbbk S_3\otimes _{\Bbbk S_2}(\GD(2)\otimes \GD(2)),
\]
where the action of $(12)\in S_2 $ on $\GD(2)^{\otimes 2}$ is given by
$\text{id} \otimes (12)$.

We may choose a basis $a_1,\dots , a_{27}$ of $O_3$
as follows: 
\[
 \begin{aligned}
& a_1=1\otimes_{\Bbbk S_2}(\mu\otimes \mu) = [[x_1, x_2], x_3], \quad
&& a_2=1\otimes_{\Bbbk S_2} (\mu\otimes \nu) =[x_1\circ x_2, x_3], \\
& a_3= 1\otimes_{\Bbbk S_2}\big (\mu\otimes \nu^{(12)}\big ) =[x_2\circ x_1 , x_3], \quad
&& a_4=1\otimes_{\Bbbk S_2} (\nu\otimes \mu) = [x_1, x_2] \circ x_3, \\
& a_5 = 1\otimes_{\Bbbk S_2} (\nu\otimes \nu)= (x_1\circ x_2)\circ x_3, \quad
&& a_6= 1\otimes_{\Bbbk S_2}\big (\nu\otimes \nu^{(12)}\big )=(x_2\circ x_1)\circ x_3, \\
& a_7 = 1\otimes_{\Bbbk S_2}\big (\nu^{(12)}\otimes \mu\big ) = x_3\circ [x_1, x_2], \quad
&& a_8 = 1\otimes_{\Bbbk S_2}\big (\nu^{(12)}\otimes \nu\big ) = x_3\circ (x_1\circ x_2), \\
& a_9 = 1\otimes_{\Bbbk S_2}\big (\nu^{(12)}\otimes \nu^{(12)}\big ) = x_3\circ (x_2\circ x_1), && \\
\end{aligned}
\]
and $a_{9+i}= (13)a_i$, $a_{18+i}=(23)a_i$ for $i=1,\dots, 9$.

Relative to the basis $a_1,\dots, a_{27}$, the vectors representing the defining relations of $\GD $
may be presented by the rows of the following matrix:
\[
\begin{smallmatrix}
 1 &0 &0 & 0 &0 & 0 &0 &0 & 0 &-1 &0 & 0 &0 &0 & 0 &0 & 0 & 0 &-1 & 0 & 0 & 0 & 0 & 0 & 0 &0 &0 \\
 0 &0 &0 & 0 &1 & 0 &0 &0 & 0 & 0 &0 & 0 &0 &0 & 0 &0 & 0 & 0 & 0 & 0 & 0 & 0 &-1 & 0 & 0 &0 &0 \\
 0 &0 &0 & 0 &0 & 1 &0 &0 & 0 & 0 &0 & 0 &0 &0 &-1 &0 & 0 & 0 & 0 & 0 & 0 & 0 & 0 & 0 & 0 &0 &0 \\
 0 &0 &0 & 0 &0 & 0 &0 &0 & 0 & 0 &0 & 0 &0 &1 & 0 &0 & 0 & 0 & 0 & 0 & 0 & 0 & 0 &-1 & 0 &0 &0 \\
 0 &0 &0 & 0 &1 &-1 &0 &0 & 0 & 0 &0 & 0 &0 &0 & 0 &0 & 0 &-1 & 0 & 0 & 0 & 0 & 0 & 0 & 0 &1 &0 \\
 0 &0 &0 & 0 &0 & 0 &0 &1 & 0 & 0 &0 & 0 &0 &0 & 0 &0 &-1 & 0 & 0 & 0 & 0 & 0 & 1 &-1 & 0 &0 &0 \\
 0 &0 &0 & 0 &0 & 0 &0 &0 &-1 & 0 &0 & 0 &0 &1 &-1 &0 & 0 & 0 & 0 & 0 & 0 & 0 & 0 & 0 & 0 &0 &1 \\
 0 &1 &0 & 1 &0 & 0 &0 &0 & 0 & 0 &0 & 0 &0 &0 & 0 &1 & 0 & 0 & 0 &-1 & 0 &-1 & 0 & 0 & 0 &0 &0 \\
 0 &0 &1 &-1 &0 & 0 &0 &0 & 0 & 0 &0 &-1 &1 &0 & 0 &0 & 0 & 0 & 0 & 0 & 0 & 0 & 0 & 0 &-1 &0 &0 \\
 0 &0 &0 & 0 &0 & 0 &1 &0 & 0 & 0 &1 & 0 &1 &0 & 0 &0 & 0 & 0 & 0 & 0 &-1 & 1 & 0 & 0 & 0 &0 &0 
\end{smallmatrix}
\]
Compute the sign-twisted orthogonal complement $U^\perp \subset O_3^\vee $ \cite{GK94}
for the $S_3$-module $U$ spanned by these vectors 
to obtain the defining relations of the Koszul dual operad $\GD^!$.
It turns out that a $\GD^!$-algebra has two operations
$\mu^\vee(x_1,x_2) = x_1\ast x_2$ and  $\nu^\vee (x_1,x_2) = x_1\star x_2 $, 
where 
\begin{equation}\label{eq:NP-1}
 x\ast y = y\ast x, \quad 
 x\ast (y\ast z) = (x\ast y)\ast z ,
\end{equation}
i.e., 
$\ast $ is associative and commutative, 
\begin{gather}
 (x\star y)\star z - x\star ( y\star z)  = (x\star z)\star y - x\star ( z\star y), \label{eq:NP-2}\\
 x\star ( y\star z) = y\star ( x\star z), \label{eq:NP-3} 
\end{gather}
i.e., $\star $ is a right Novikov product, and 
\begin{gather}
x\star(y\ast z)=(x\star y)\ast z,  \label{eq:21}\\
x\star(y\ast z)+y\star(x\ast z)=(x\ast y)\star z. \label{eq:3}
\end{gather}

\begin{remark}
Relations \eqref{eq:NP-1}--\eqref{eq:3} up to a change of notations 
\[
 xy = x\ast y,\quad x\circ y = y\star x
\]
coincide with the  identities defining a {\em differential GDN-Poisson algebra} \cite{BCZ-NP}. 
This is a proper sub-variety of the variety of Novikov---Poisson algebras 
introduced in \cite{Xu1997}.
However, all algebras in \cite{BCZ-NP} are assumed to be unital, i.e., with an identity 
$e$ relative to the associative and commutative operation $\ast $. This is an essential restriction 
since it is unclear in general how to join a unit to a $\GD^!$-algebra. This is why in this section 
we have to study the free $\GD^!$-algebra without an identity assumption. 
\end{remark}

Let us find a normal form of elements in the free $\GD^!$-algebra, i.e., 
a linear basis of $\GD^!\<X\>$.

Denote by $\BiCom$ the variety of algebras with two 
associative and commutative operations $*$ and $\odot $
such that 
\begin{equation}\label{eq:BicomIdentity}
(x\odot y)* z = x\odot (y* z).    
\end{equation}

It is easy to see that the following identities hold in $\BiCom$:
\begin{equation}\label{eq:BiComCorollary}
 (x\odot y)* z = (x* y)\odot z,
 \quad 
 x* (y\odot z) = y* (x\odot z).
\end{equation}
Indeed, apply commutativity of $\ast $ and \eqref{eq:BicomIdentity} to obtain
\begin{multline}\nonumber
(x\odot y)* z=(y\odot x)* z=y\odot (x* z)=y\odot (z* x)=(y\odot z)* x \\
=(z\odot y)* x=z\odot (y* x)=(x* y)\odot z.
\end{multline}
Similarly, 
\[
x* (y\odot z)=(z\odot y)*x=z\odot (y*x)=z\odot (x*y)=(z\odot x)*y=y* (x\odot z).
\]

\begin{theorem}\label{thm:BiCom-Basis}
A linear basis of the free $\BiCom$-algebra 
$\BiCom\<X\>$ generated by a linearly ordered 
set $X$ consists of the words
\begin{equation}\label{eq:BiCom-Basis}
 x_1* \dots * x_t * (y_1\odot \dots \odot y_k),
 \quad 
 x_1\le \dots \le x_t\le y_1\le \dots \le y_k,
\end{equation}
$x_i,y_j\in X$.
\end{theorem}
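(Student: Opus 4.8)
The plan is to show that the words in \eqref{eq:BiCom-Basis} both span $\BiCom\<X\>$ and are linearly independent. For the spanning part, I would first argue that every element of $\BiCom\<X\>$ is a linear combination of monomials, i.e.\ of bracketed products of generators in $X$ using $*$ and $\odot$. Using associativity and commutativity of each of the two operations, together with the mixed identity \eqref{eq:BicomIdentity} and its corollaries \eqref{eq:BiComCorollary}, I would set up a rewriting procedure: any monomial can be brought to the normal form in which all $\odot$-operations are performed first (on a multiset of generators), and then the result is multiplied by a $*$-product of the remaining generators. Concretely, \eqref{eq:BicomIdentity} lets one "pull" a $*$-factor out of a $\odot$-product, and the first identity in \eqref{eq:BiComCorollary} lets one move $\odot$-products past $*$-products; iterating, every monomial collapses to a single $*$-chain of generators applied to a single $\odot$-chain of generators. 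Then commutativity of both operations lets me sort the generators inside each chain into nondecreasing order. This shows the listed words span.

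For linear independence, the clean approach is to exhibit a concrete model of $\BiCom$ in which these words act as distinct, visibly independent elements — or, more precisely, to construct a surjection from the free algebra on the abstract generators onto a space with basis indexed by the pairs $(x_1\le\dots\le x_t;\,y_1\le\dots\le y_k)$ and check it is well-defined. A natural candidate: let $B$ be the free associative commutative algebra (polynomial algebra) $\Bbbk[X]$ without unit — i.e.\ the augmentation ideal of $\Bbbk[X\cup\{1\}]$ — as the carrier for $*$, and realize $\odot$ as multiplication by a distinguished idempotent-like device. A cleaner route is to take the semigroup algebra of the commutative semigroup of pairs: represent the word in \eqref{eq:BiCom-Basis} by the pair $(\{x_1,\dots,x_t\};\{y_1,\dots,y_k\})$ of a multiset over $X$ and a \emph{nonempty} multiset over $X$, define $*$ on pairs by $(S;T)*(S';T')=(S\cup S'\cup T\cup T';\,T'')$ — wait, this needs care because the $\odot$-block must survive. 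I would instead define $(S;T)*(S';T') = (S\sqcup S';\, T\sqcup T')$ reading $T$ as "the $\odot$-content" and then verify that \eqref{eq:BicomIdentity} forces exactly this, while $\odot$ is $(S;T)\odot(S';T')=(\emptyset;\,S\sqcup T\sqcup S'\sqcup T')$. One checks associativity/commutativity of both and \eqref{eq:BicomIdentity} directly on pairs, so this defines a $\BiCom$-algebra; sending each generator $x$ to $(\emptyset;\{x\})$ extends to a homomorphism from $\BiCom\<X\>$ sending the basis word \eqref{eq:BiCom-Basis} to the pair $(\{x_1,\dots,x_t\};\{y_1,\dots,y_k\})$, and distinct normal words go to distinct pairs, hence to linearly independent elements of the semigroup algebra.

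The main obstacle is the rewriting/confluence argument for spanning: I need to be sure that the reduction to "one $*$-chain applied to one $\odot$-chain" actually terminates and reaches a unique shape regardless of the order of rewrites, and in particular that a product of two normal words can itself be renormalized — $(u_1*\dots*u_s*(v_1\odot\dots\odot v_p)) * (u'_1*\dots*u'_{s'}*(v'_1\odot\dots\odot v'_{p'}))$ must reduce to a single normal word. Here the key move is: a $*$-product of two $\odot$-chains, $(v_1\odot\dots\odot v_p)*(v'_1\odot\dots\odot v'_{p'})$, equals $v_1\odot\dots\odot v_p\odot(v'_1*\dots*v'_{p'})$ by \eqref{eq:BicomIdentity}, hence equals $v_1\odot\dots\odot v_{p-1}\odot (v'_1*\dots*v'_{p'}*v_p)$, and continuing (pulling each $v_i$ except one out as a $*$-factor) collapses the two $\odot$-chains into one $\odot$-chain with the extra generators absorbed into the $*$-part via \eqref{eq:BiComCorollary}. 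Once this "merge lemma" is in hand, closure of the spanning set under both operations is immediate, and since the spanning set is closed under the operations and contains the generators, it is all of $\BiCom\<X\>$. Linear independence from the pair-semigroup model then finishes the proof. \qed
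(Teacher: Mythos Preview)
Your overall strategy matches the paper's --- show spanning by rewriting, then prove independence by building a concrete $\BiCom$-algebra on the set of normal words --- but your execution has a genuine gap in the independence step.

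In your pair model you send each generator $x$ to $(\emptyset;\{x\})$ and define
\[
(S;T)*(S';T')=(S\sqcup S';\,T\sqcup T'),\qquad (S;T)\odot(S';T')=(\emptyset;\,S\sqcup T\sqcup S'\sqcup T').
\]
Now compute $x*y$ and $x\odot y$ for generators: both equal $(\emptyset;\{x,y\})$. More generally, the subalgebra generated by your images of $X$ consists only of pairs with empty first component, and on that subalgebra $*$ and $\odot$ coincide. So your homomorphism does \emph{not} send the normal word $x_1*\dots *x_t*(y_1\odot\dots\odot y_k)$ to $(\{x_1,\dots,x_t\};\{y_1,\dots,y_k\})$; it sends every monomial to $(\emptyset;\text{its multiset of letters})$, and in particular cannot separate $x*y$ from $x\odot y$. (In fact your axiom \eqref{eq:BicomIdentity} fails on general pairs: $(x\odot y)*z$ has first component $S_z$, while $x\odot(y*z)$ has first component $\emptyset$.) The invariant you are missing is the \emph{number of $*$-operations}: the correct model is pairs $(M,t)$ with $M$ a nonempty multiset over $X$ and $0\le t<|M|$, with
\[
(M,t)*(M',t')=(M\sqcup M',\,t+t'+1),\qquad (M,t)\odot(M',t')=(M\sqcup M',\,t+t'),
\]
and $x\mapsto(\{x\},0)$. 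This is precisely what the paper does, phrased as defining the two products directly on the span of the normal words \eqref{eq:BiCom-Basis}.

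Two smaller issues in your spanning argument. First, your ``merge lemma'' identity
\[
(v_1\odot\dots\odot v_p)*(v'_1\odot\dots\odot v'_{p'})=v_1\odot\dots\odot v_p\odot(v'_1*\dots*v'_{p'})
\]
is false: the left side has one $*$ and the right side has $p'-1$, and in the model above the left maps to $t=1$ while the right maps to $t=p'-1$. A correct reduction (via \eqref{eq:BicomIdentity} repeatedly) gives $v_1\odot\dots\odot v_{p-1}\odot v'_1\odot\dots\odot v'_{p'-1}\odot(v_p*v'_{p'})$, which you then pull into normal form using \eqref{eq:BiComCorollary}. Second, after sorting each chain you still need the cross condition $x_t\le y_1$; this requires the second identity of \eqref{eq:BiComCorollary}, $x*(y\odot z)=y*(x\odot z)$, to swap the largest $*$-letter with the smallest $\odot$-letter, a step the paper performs explicitly and you omit.
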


\begin{proof} 
It can be easily seen that every element of a $\BiCom$ algebra
generated by a set $X$ can be transformed to 
a linear combination of words that have the following form:
\[
x_{i_1}\ast x_{i_2}\ast\ldots\ast x_{i_t}\ast(y_{j_{1}}\odot\ldots\odot y_{j_k}),\quad x_i,y_j\in X.
\]
Commutativity of $\ast$ and $\odot$ allows us to assume 
$x_{i_1}\le \dots \le x_{i_t}$ and $y_{j_1}\le \dots \le y_{j_k}$. 
If $x_{i_t}>y_{j_1}$ then we may interchange them by means of the second 
relation in \eqref{eq:BiComCorollary}.
Hence, every element of $\BiCom$ can be transformed to
a linear combination of monomials (\ref{eq:BiCom-Basis}).

To prove linear independence of (\ref{eq:BiCom-Basis}),
 consider a linear space $B'$ spanned by (\ref{eq:BiCom-Basis}) 
 and define multiplications $\ast$ and $\odot $ on $B'$ in the following way.
 If $u=x_1\ast\ldots\ast x_n\ast(y_1\odot\ldots\odot y_m)$,
 $v=z_1\ast\ldots\ast z_p\ast(t_1\odot\ldots\odot t_q)$
 then 
\[
u\ast v =
l_1\ast\ldots\ast l_{n+p+1}\ast(l_{n+p+2}\odot\ldots\odot l_{n+p+m+q})
\]
\[
u\odot v = 
l_1\ast\ldots\ast l_{n+p}\ast(l_{n+p+1}\odot\ldots\odot l_{n+p+m+q})
\]
where the sequence $(l_1,\ldots,l_{n+p+m+q})$ 
is just the ordering of the sequence
$(x_1,\dots, x_n,$ $y_1,\dots , y_m, z_1,\dots , z_p, t_1,\dots , t_q)$.
It is easy to check that $B^{'}$ is a $\BiCom$ algebra. Hence, 
$B^{'}\simeq  \BiCom\<X\>$.
\end{proof}

Let $\BiCom\Der\<X,d\>$ stand for the free differential 
$\BiCom$-algebra generated by a set $X$ with a derivation~$d$
(i.e., $d$ is a derivation relative to the both 
operations $\ast $ and~$\odot $).

\begin{theorem}\label{thm:BiComDer-GDual}
Let $A$ be a $\BiCom$-algebra with a derivation $d$. 
Then $A$ is a $\GD^!$-algebra relative to the operations $*$ and $\star$
defined by
\[
x\star y =d(x)\odot y.
\]
\end{theorem}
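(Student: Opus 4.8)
The plan is to verify directly that the two operations $\ast$ and $\star$, with $x\star y = d(x)\odot y$, satisfy the defining identities \eqref{eq:NP-1}--\eqref{eq:3} of a $\GD^!$-algebra; the only inputs are that $\ast$ and $\odot$ are associative and commutative, that they satisfy \eqref{eq:BicomIdentity} (and hence the corollaries \eqref{eq:BiComCorollary}), and that $d$ is a derivation of both products. Identity \eqref{eq:NP-1} is immediate, since $\ast$ is the associative commutative operation of the underlying $\BiCom$-algebra. For the Novikov part, I would expand $(x\star y)\star z = d\big(d(x)\odot y\big)\odot z = d^2(x)\odot y\odot z + d(x)\odot d(y)\odot z$ using the Leibniz rule for $\odot$ and associativity, while $x\star(y\star z) = d(x)\odot d(y)\odot z$. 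Thus $(x\star y)\star z - x\star(y\star z) = d^2(x)\odot y\odot z$, which is symmetric in $y$ and $z$ by commutativity of $\odot$; this is the left-symmetry \eqref{eq:NP-2}. The same computation shows $x\star(y\star z) = d(x)\odot d(y)\odot z = y\star(x\star z)$, which is the right-commutativity \eqref{eq:NP-3}.

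It remains to handle the two mixed identities. Identity \eqref{eq:21} is nothing but \eqref{eq:BicomIdentity} applied to $d(x),y,z$: $x\star(y\ast z) = d(x)\odot(y\ast z) = \big(d(x)\odot y\big)\ast z = (x\star y)\ast z$. For \eqref{eq:3} I would start from the right-hand side and use that $d$ is a derivation of $\ast$: $(x\ast y)\star z = d(x\ast y)\odot z = \big(d(x)\ast y\big)\odot z + \big(x\ast d(y)\big)\odot z$. Now rewrite each summand by the first corollary in \eqref{eq:BiComCorollary} together with \eqref{eq:BicomIdentity}: $\big(d(x)\ast y\big)\odot z = \big(d(x)\odot y\big)\ast z = d(x)\odot(y\ast z) = x\star(y\ast z)$, and symmetrically $\big(x\ast d(y)\big)\odot z = \big(d(y)\odot x\big)\ast z = d(y)\odot(x\ast z) = y\star(x\ast z)$, so the right-hand side equals $x\star(y\ast z) + y\star(x\ast z)$, as required by \eqref{eq:3}.

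There is no genuine obstacle: the argument is a finite check, and the only point to watch is bookkeeping — applying \eqref{eq:BicomIdentity} and \eqref{eq:BiComCorollary} to the correct factors, and invoking the derivation property for the right operation ($\odot$ in \eqref{eq:NP-2}--\eqref{eq:NP-3}, $\ast$ in \eqref{eq:3}). Once all six identities are verified, $(A,\ast,\star)$ is a $\GD^!$-algebra. This construction is the analogue, one operation richer, of Theorem \ref{thm:th1} for $\GD$-algebras, and it is natural to expect that, in the sequel, the induced surjection from $\BiCom\Der\<X,d\>$ onto $\GD^!\<X\>$, combined with the basis of Theorem \ref{thm:BiCom-Basis}, yields an explicit normal form for the free $\GD^!$-algebra.
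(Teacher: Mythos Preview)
Your proof is correct and follows essentially the same direct-verification approach as the paper; in fact you supply the right-Novikov check that the paper leaves as ``known.'' One cosmetic slip: you label \eqref{eq:NP-2} ``left-symmetry'' and \eqref{eq:NP-3} ``right-commutativity,'' but for the right Novikov convention used here these are right-symmetry and left-commutativity respectively---the computations themselves are fine.
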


\begin{proof}
The operation $x\star y =d(x)\odot y$ is known to be right Novikov. 
It remains to check the identities (\ref{eq:21}), (\ref{eq:3}):
\[
x\star(y\ast z)-(x\star y)\ast z=d(x)\odot(y\ast z)-(d(x)\odot y)\ast z=0,
\]
\begin{multline}\nonumber
x\star(y\ast z)+y\star(x\ast z)=d(x)\odot(y\ast z)+d(y)\odot(x\ast z)
=
(d(x)\odot y)\ast z+(d(y)\odot x)\ast z  \\
=
(d(x)\ast y)\odot z+(d(y)\ast x)\odot z
=
d(x\ast y)\odot z 
=
(x\ast y)\star z
\end{multline}
by~\eqref{eq:BiComCorollary}.
\end{proof}

Let us say that a $\GD^!$-algebra $B$ is {\em special} if it can be 
embedded into an appropriate differential $\BiCom$-algebra. 
Denote by $\SGD^{!}$ the variety generated by special $\GD^!$-algebras
(all homomorphic images of special $\GD^!$-algebras).
Obviously, the free $\SGD^{!}$-algebra $\SGD^!\<X\>$ generated by a set 
$X$ is isomorphic to the subalgebra of $\BiCom\Der\<X,d\>$ 
generated by $X$ relative to the operations $\star$ and~$*$.

For every monomial $u\in \BiCom\Der\<X,d\>$ define the weight 
of $u$ by the following induction rule:
\[
\begin{gathered}
 \wt(x)=-1,\quad x\in X; \\
 \wt(d(u)) = \wt(u)+1; \\
 \wt(u* v) = \wt(u)+\wt(v)+1;\\
 \wt(u\odot v) = \wt(u)+\wt(v).
\end{gathered}
\]
The space of differential $\BiCom$-monomials of weight $-1$ 
has the following basis:
\begin{equation}\label{eq:SDG!-Basis}
\begin{gathered}
x_1*\dots * x_t *(y_1\odot \dots \odot y_i \odot d^{r_{i+1}}(y_{i+1}) \odot \dots \odot d^{r_k}(y_k)), \\
x_1\le \dots \le x_t\le y_1\le \dots \le y_i, \\
r_{i+1},\dots , r_k>0,\quad r_{i+1}+\dots +r_k-k=-1, \\
(r_{i+1},y_{i+1})\le \dots \le (r_k,y_k) \ \text{lexicographically}, 
\end{gathered}
\end{equation}

\begin{theorem}\label{thm:SDG!-weight}
An element $f\in \BiCom\Der\<X,d\>$ belongs to 
$\SGD^!\<X\>$ if and only if all monomials that appear in $f$ have weight~$-1$.
\end{theorem}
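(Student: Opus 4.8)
The asserted equivalence is just the statement that the subalgebra $\SGD^!\<X\>\subseteq\BiCom\Der\<X,d\>$ equals the span $W$ of the weight-$(-1)$ monomials, so I would prove the two inclusions separately.

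For $\SGD^!\<X\>\subseteq W$ --- the easy direction --- I would first observe that $\wt$ is a $\mathbb Z$-grading of $\BiCom\Der\<X,d\>$. Indeed, by Lemma~\ref{lem:VarDerFree} this algebra is $\BiCom\<d^\omega X\>$, so it has the monomial basis of Theorem~\ref{thm:BiCom-Basis}; and the relations of a differential $\BiCom$-algebra (commutativity and associativity of $*$ and $\odot$, identity~\eqref{eq:BicomIdentity}, and the two Leibniz rules) are all weight-homogeneous, so the weight of a monomial is well defined and $\BiCom\Der\<X,d\>=\bigoplus_s(\BiCom\Der\<X,d\>)_s$. Since $\wt(u* v)=\wt(u)+\wt(v)+1$ and $\wt(u\star v)=\wt(d(u)\odot v)=\wt(u)+\wt(v)+1$, both $*$ and $\star$ map the weight-$(-1)$ part into itself, and every generator $x\in X$ lies there; hence so does the whole subalgebra $\SGD^!\<X\>$. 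This gives the ``only if''.

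For the reverse inclusion $W\subseteq\SGD^!\<X\>$ --- the real content --- the plan is to show that every monomial of the basis~\eqref{eq:SDG!-Basis} of $W$ is a $\{*,\star\}$-polynomial in $X$, by a nested induction. First I would dispose of the $*$-part: if $M$ has $t\ge1$ then $M=x_1* M'$, where $M'$ is the basis monomial with $x_1$ deleted; computing $x_1* M'$ in $\BiCom\Der\<X,d\>$ and using commutativity/associativity of $*$ together with the ordering $x_1\le x_2\le\cdots$ returns $M$, and $M'\in\SGD^!\<X\>$ by induction on degree. This reduces the problem to a pure $\odot$-block $M=y_1\odot\cdots\odot y_i\odot d^{r_{i+1}}(y_{i+1})\odot\cdots\odot d^{r_k}(y_k)$, where the weight-$(-1)$ condition reads $\sum_{j>i}(r_j-1)=i-1$; I would then induct on the number $i$ of underived factors. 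When $i=1$ all $r_j$ equal $1$ and $M$ is exactly the value in $\BiCom\Der\<X,d\>$ of the right-normed $\star$-product $y_2\star(y_3\star(\cdots\star(y_k\star y_1)\cdots))=d(y_2)\odot\cdots\odot d(y_k)\odot y_1$.

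The step $i\ge2$ is where the work lies. Here some $r_j\ge2$; I would pick a derived factor $d^r(y)$ with $r=\max_jr_j$, form the $\odot$-block $N$ obtained from $M$ by deleting the underived factor $y_i$ and lowering $d^r(y)$ to $d^{r-1}(y)$, note that $N$ is again of weight $-1$, has only $i-1$ underived factors, and has smaller degree (so $N\in\SGD^!\<X\>$), and then expand $N\star y_i=d(N)\odot y_i$ by the Leibniz rule. One summand --- from differentiating $d^{r-1}(y)$ back up to $d^r(y)$ --- is $M$ itself; every other summand is a weight-$(-1)$ basis monomial which either has strictly fewer underived factors, so is covered by the induction on $i$, or has exactly $i$ underived factors and arises from $M$ by shifting one derivative off $d^r(y)$ onto some other factor. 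The hard part will be controlling this last family of ``level-$i$'' error terms: such a shift strictly lowers the partition of derivative-exponents in the dominance order when the maximum is attained at a unique factor, but when several factors share the maximal exponent the partition can stay the same or even grow, so one has to work with a finer well-ordering on the weight-$(-1)$ $\odot$-blocks of a given multidegree (dominance of the exponent partition, refined by the attached variables, with $d^r(y)$ chosen to match the tie-break) and check that every error term is genuinely smaller; an innermost induction on that ordering then closes the step. Equivalently, this amounts to showing that the images in $\BiCom\Der\<X,d\>$ of a normal-form spanning set of $\GD^!\<X\>$ are triangular against the basis~\eqref{eq:SDG!-Basis}, which forces their span to be all of $W$. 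Combining the two inclusions gives $\SGD^!\<X\>=W$.
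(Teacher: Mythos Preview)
Your reduction matches the paper's proof exactly: the ``only if'' direction is a weight-grading argument, and for the ``if'' direction you peel off the $*$-factors to reduce to a pure $\odot$-block $w$ of weight~$-1$. At that point the paper does not argue further but simply invokes \cite{DzhLowf}: the $\odot$-block is a monomial in the free differential commutative algebra $\Com\Der\<X,d\>$ (with $\odot$ as the product), and the Dzhumadil'daev--L\"ofwall theorem says precisely that every weight-$(-1)$ monomial there lies in the right-Novikov subalgebra generated by $X$ under $a\star b=d(a)\odot b$. So the only substantive difference between your proposal and the paper is that you try to reprove this cited result by an induction on the number $i$ of underived factors.

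That attempted induction has a real gap, and the fix you suggest does not close it. Take $M=y_1\odot y_2\odot y_3\odot d^2(y_4)\odot d^2(y_5)$, so $i=3$ and the exponent partition is $(2,2)$. Lowering $d^2(y_4)$ to $d(y_4)$ and deleting $y_3$ gives $N=y_1\odot y_2\odot d(y_4)\odot d^2(y_5)$; expanding $N\star y_3=d(N)\odot y_3$ produces $M$, two terms with $i=2$, and the level-$i$ error $M'=y_1\odot y_2\odot y_3\odot d(y_4)\odot d^3(y_5)$ with partition $(3,1)$, which strictly \emph{dominates} $(2,2)$. Any well-ordering that refines dominance---in particular ``dominance refined by the attached variables''---therefore makes $M'$ larger than $M$, not smaller, so the innermost induction cannot proceed. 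Worse, applying your step to $M'$ (its maximum is now unique) yields the \emph{same} $N$ and hence the same equation $N\star y_3=(\text{lower})+M+M'$; lowering $d^2(y_5)$ instead gives a second equation $M+M''\in\SGD^!\<X\>$, but you never obtain a relation isolating $M$. The obstruction occurs whenever the maximal exponent has multiplicity at least~$2$. The cleanest repair is to cite \cite{DzhLowf} for the $\odot$-block, exactly as the paper does; the triangularity idea in your last sentence is closer to what the paper carries out later (Lemma~\ref{lem:GD!-Normal} inside the proof of Theorem~\ref{thm:SGD!=GD!}), but that is a separate argument, not a refinement of the induction sketched here.
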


\begin{proof}
The ``only if''
 part can be easily proved by induction. For the converse (``if'') part, it is enough to consider 
the case when $f$ is a monomial of the form \eqref{eq:SDG!-Basis}. 
Note that if $\wt(f)=-1$ then
$f=x_1*\dots * x_t*w$, where
$w = y_1\odot \dots \odot y_i \odot 
d^{r_{i+1}}(y_{i+1})\odot \dots \odot d^{r_k}(y_k)$,  
$\wt(w)=-1$.
As shown in \cite{DzhLowf}, $w$
can be expressed via $\star $. 
Then $f\in \SGD^!\<X\>$.
\end{proof}

\begin{theorem}\label{thm:SGD!=GD!}
For a linearly ordered set $X$, $\SGD^!\<X\>$ is isomorphic to $\GD^!\<X\>$.
\end{theorem}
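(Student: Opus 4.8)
The plan is to build a canonical surjection $\pi\colon \GD^!\<X\>\to \SGD^!\<X\>$ and then prove that it is injective by a counting argument based on a normal form in $\GD^!\<X\>$. The surjection exists because $\SGD^!$ is a subvariety of $\GD^!$: by Theorem~\ref{thm:BiComDer-GDual} every differential $\BiCom$-algebra $A$ becomes a $\GD^!$-algebra under $\ast$ and $x\star y=d(x)\odot y$, hence a special $\GD^!$-algebra is a $\GD^!$-subalgebra of such an $A$, and the variety generated by special $\GD^!$-algebras is contained in $\GD^!$. Applying the universal property of $\GD^!\<X\>$ to the generating set $X\subset\SGD^!\<X\>$ produces a surjective homomorphism $\pi$ fixing $X$. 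By Theorem~\ref{thm:SDG!-weight} the algebra $\SGD^!\<X\>$ is the weight~$-1$ subspace of $\BiCom\Der\<X,d\>$, so it has the explicit linear basis~\eqref{eq:SDG!-Basis}. Since $\pi$ is a graded surjection between spaces graded by multidegree in $X$, it will be an isomorphism as soon as $\dim\GD^!\<X\>_\lambda\le\dim\SGD^!\<X\>_\lambda$ for every multidegree $\lambda$, i.e.\ as soon as $\GD^!\<X\>$ is spanned by a family of monomials in $\ast,\star$ that is in bijection, multidegree by multidegree, with the monomials~\eqref{eq:SDG!-Basis}.

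For the spanning statement I would set up a rewriting in $\GD^!\<X\>$ using only the defining identities~\eqref{eq:NP-1}--\eqref{eq:3}. First, reading \eqref{eq:21} as $x\star(y\ast z)\to(x\star y)\ast z$ and combining \eqref{eq:21} with \eqref{eq:3} to get $(x\ast y)\star z=(x\star y)\ast z+(y\star x)\ast z$, one pushes every occurrence of $\ast$ out of the arguments of $\star$; together with associativity and commutativity of $\ast$ from~\eqref{eq:NP-1}, this turns an arbitrary monomial into a linear combination of products $p_1\ast\dots\ast p_s$ in which each $p_j$ is a single variable or a pure $\star$-monomial, the $p_j$ listed in a fixed order. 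Second, one brings each pure $\star$-monomial to the standard normal form of a free right Novikov algebra by means of \eqref{eq:NP-2} and \eqref{eq:NP-3} — exactly the procedure used in \cite{DzhLowf, BCZ-NP}. Alternating these two reductions terminates: assign to a monomial the sum, over all $\ast$-nodes, of the number of $\star$-ancestors of that node; both ``push-$\ast$-out'' rewrites strictly decrease this quantity, while the Novikov normalization introduces no new $\ast$. The resulting canonical monomials are thus parametrized by an ordered top $\ast$-part together with a Novikov-normal $\star$-monomial in the remaining variables, data which runs exactly over $(x_1\le\dots\le x_t;\ y_1\le\dots\le y_i;\ (r_{i+1},y_{i+1})\le\dots\le(r_k,y_k))$.

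It remains to see that this parametrization matches \eqref{eq:SDG!-Basis} in each multidegree. Under $x\star y=d(x)\odot y$ a variable maps to itself, and, by the proof of Theorem~\ref{thm:SDG!-weight} (resting on \cite{DzhLowf, BCZ-NP}), the Novikov-normal $\star$-monomials in a variable set $Z$ are in bijection with the admissible weight~$-1$ monomials $y_1\odot\dots\odot y_i\odot d^{r_{i+1}}(y_{i+1})\odot\dots\odot d^{r_k}(y_k)$ in $Z$; multiplying on top by the inert factor $x_1\ast\dots\ast x_t$ then identifies the canonical monomials of $\GD^!\<X\>$ with the monomials~\eqref{eq:SDG!-Basis} of the same multidegree. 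This yields $\dim\GD^!\<X\>_\lambda\le\dim\SGD^!\<X\>_\lambda$, and hence the isomorphism. (If one prefers an argument independent of the exact multilinear dimensions of the free Novikov algebra, one can instead track leading terms: with respect to the same monomial order used for \eqref{eq:SDG!-Basis}, $\pi$ sends the canonical monomial indexed by $b$ to $b$ plus a combination of monomials preceding $b$, so the images of the canonical monomials are linearly independent; either way $\pi$ is bijective.)

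The main obstacle is the second paragraph: verifying that the combined rewriting by \eqref{eq:NP-1}--\eqref{eq:3} is terminating and that its irreducible monomials are precisely the canonical ones described above — the interaction of the mixed identities \eqref{eq:21}, \eqref{eq:3} with the Novikov relations requires some care, although the weight/ancestor count above makes it routine. It is worth stressing that one does \emph{not} need confluence of this rewriting, nor a Gr\"obner-type linear-independence argument inside $\GD^!\<X\>$: linear independence of the canonical monomials is imported for free from the known basis~\eqref{eq:SDG!-Basis} of $\SGD^!\<X\>$ through the surjection $\pi$, and that is exactly what makes the spanning statement sufficient.
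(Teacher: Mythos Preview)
Your overall strategy—build the surjection $\pi$, exhibit a spanning set in $\GD^!\<X\>$, and compare against the known basis \eqref{eq:SDG!-Basis} of $\SGD^!\<X\>$—is exactly the paper's approach. However, your spanning argument has a real gap. Your two rewrite rules (pushing $\ast$ out of the arguments of $\star$) do terminate with the measure you give, but the terminal monomials are $\ast$-products $p_1\ast\dots\ast p_s$ in which \emph{several} of the $p_j$ may be nontrivial $\star$-monomials: for instance $(a\star b)\ast(c\star d)$ with $a,b,c,d\in X$ is already irreducible under your rules. Yet the basis \eqref{eq:SDG!-Basis} corresponds to monomials $x_1\ast\dots\ast x_t\ast w$ with $x_j\in X$ and a \emph{single} $\RNov$-word $w$, so your ``canonical monomials'' overcount and the inequality $\dim\GD^!\<X\>_\lambda\le\dim\SGD^!\<X\>_\lambda$ does not follow. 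The missing step is to merge two $\star$-factors across an~$\ast$: one must use \eqref{eq:21} in the \emph{opposite} direction, $(w_1\star w_2)\ast W=w_1\star(w_2\ast W)$, and then appeal to induction on degree for $w_2\ast W$ (this is exactly Case~1 of the paper's Lemma~\ref{lem:GD!-Normal}). That reverse move increases your termination weight, so the rewriting framework breaks down here and one should argue by induction on degree instead.

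There is a second, smaller omission. Even after reducing to a single $\star$-factor, your parametrization $(x_1\le\dots\le x_t;\ y_1\le\dots\le y_i;\ \dots)$ lacks the constraint $x_t\le y_1$ that appears in \eqref{eq:SDG!-Basis}, so the counts still would not match. The identity needed in $\GD^!\<X\>$ to impose it is $x\ast(w\star y)=y\ast(w\star x)$, obtained from \eqref{eq:21} and commutativity of $\ast$ via $(w\star y)\ast x=w\star(y\ast x)=w\star(x\ast y)=(w\star x)\ast y$; the paper uses this (via \eqref{eq:BiComCorollary}) to cut the spanning set down to the correct size before comparing with~\eqref{eq:SDG!-Basis}.
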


\begin{proof}
First, let us prove the following technical statement.
\begin{lemma}\label{lem:GD!-Normal}
The following elements span the free $\GD^!$-algebra 
$\GD^!\<X\>$ generated by a linearly ordered set $X$:
\[
 x_1*\dots *x_t*w,\quad x_1\le \dots \le x_t, \ x_i\in X,
\]
where $w$ is a monomial from $\RNov \<X\>$ relative to the operation~$\star $.
\end{lemma}

Here $\RNov $ denotes the variety of right Novikov algebras defined by 
the identities \eqref{eq:NP-2} and \eqref{eq:NP-3}.

\begin{proof}
Let $f$ be a monomial in $\GD^!\<X\>$. Proceed by induction on $n=\deg f$.
If $n=1,2$ then the statement is clear. 
Suppose $n\ge 3$. Then $f=u*v$ or $f=u\star v$, 
$\deg u,\deg v <n$.
By the inductive assumption, 
\[
u = x_1*\dots x_r * w_1,
\quad 
v = y_1*\dots y_s * w_2,
\]
where 
$w_1,w_2\in \RNov\<X\> $.

Case 1: $f=u*v$. Then $f=x_1*\dots *x_r*y_1*\dots *y_s * (w_1*w_2)$.
If $\deg w_1=1$ or $\deg w_2=1$ then we are done.
If $w_i=(w_{i1}\star w_{i2})$, $i=1,2$, then the inductive assumption implies 
\[
(w_{11}\star w_{12})*(w_{21}\star w_{22} ) = 
w_{11}\star (w_{12}*(w_{21}\star w_{22} )) = 
w_{11}\star (z*w')=z*(w_{11}\star w'),
\]
where $z\in X$, $w'\in \RNov\<X\>$.
Hence, 
$f=x_1*\dots *x_r*y_1*\dots *y_s * z*(w_{11}\star w')$.

Case 2: $f=u\star v$. 
Then $f=(x_1*\dots *x_r*w_1)\star(y_1*\dots *y_s * w_2)$.
If $\deg u=1$ then the statement follows from \eqref{eq:21}
and commutativity of $*$:
\[
f=x_1\star (y_1*\dots *y_s * w_2) = y_1*\dots * y_r * (x_1\star w_2).
\]
For $\deg u>1$, we have to consider the following sub-cases:
$r=0$ and $r>0$. The first one is similar to what is done for $\deg u=1$, 
in the second we have
$u=x_1*u'$, so by \eqref{eq:3}
\[
f=(x_1*u')\star (y_1*\dots *y_s * w_2) = 
((x_1\star u') + (u'\star x_1))*(y_1*\dots *y_s * w_2).
\]
The latter expression is of the type considered in Case~1.
\end{proof}

Suppose $X$ is linearly ordered. 
Consider the epimorphism 
\[
\varphi : \GD^!\<X\> \to \SGD^!\<X\>\subset \BiCom\Der \<X,d\>
\]
given by $\varphi(x)=x$ for $x\in X$, 
$\varphi(u\star v) = d(\varphi(u))\odot \varphi(v)$, 
$\varphi(u*v) = \varphi(u)*\varphi(v)$.

Recall that elements from $\RNov \<X\>$ may be presented as differential 
polynomials from the free differential commutative algebra
$\Com\Der\<X,d\>$
(relative to binary operation $\odot $ and derivation $d$)
 by means of $x\star y = d(x)\odot y$.
Such a presentation is known to be unique \cite{DzhLowf}.
Therefore, by Lemma~\ref{lem:GD!-Normal}
$\GD^!\<X\>$ is spanned by elements
\begin{equation}\label{eq:GD!-Normal}
 u = x_1*\dots *x_t*(d^{s_1}(y_1) \odot \dots \odot d^{s_k}(y_k)),
 \quad x_i\in X, \ y_j\in X,
\end{equation}
where 
$x_1\le \dots \le x_t$,
$s_1+\dots +s_k-k=-1$, 
and 
\[
(s_1,y_1)\le \dots \le (s_k,y_k)\ \text{lexicographically}.
\]
Note that if $u$ is of the form \eqref{eq:GD!-Normal} then 
$\varphi(u)=u \in \BiCom\Der\<X,d\>$.
Moreover, $s_1=0$.

Relation \eqref{eq:BiComCorollary} implies that 
 we may add one more condition on the words 
\eqref{eq:GD!-Normal}: $x_t\le y_1$. 

Therefore, we have found a set of linear generators in 
$\GD^!\<X\>$ whose images in $\SGD^!\<X\>$ are linearly independent. 
Hence, 
$\GD^!\<X\> \simeq \SGD^!\<X\>$.
In particular, the set of monomials \eqref{eq:SDG!-Basis} is a linear basis of $\GD^!\<X\>$.
\end{proof}

Consider the class $\SNP$ of all algebras with two operations 
$\ast $ and $\star $ obtained from  differential associative commutative algebras 
via
\begin{equation}\label{eq:ComDerOp}
a\ast b = ab, \quad a\star b = d(a)b.
\end{equation}

\begin{corollary}\label{cor:GD-SNP}
The class of all homomorphic images of algebras from $\SNP$ coincides with the variety of $\GD^!$-algebras.
\end{corollary}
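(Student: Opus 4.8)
The plan is to deduce Corollary~\ref{cor:GD-SNP} from the chain of results already established in this section, so the work is entirely one of assembling the pieces rather than proving anything new from scratch. First I would observe that $\SNP$ is, by definition, the class of algebras of the form $A^{(d*')}=(A,\ast,\star)$ where $A$ is a \emph{commutative associative} differential algebra with derivation $d$, $a\ast b=ab$ and $a\star b=d(a)b$. The key point is that a commutative associative algebra is exactly a $\BiCom$-algebra in which the two operations $\ast$ and $\odot$ coincide; more precisely, if $(C,\cdot)$ is commutative associative with derivation $d$, then setting $\ast=\odot=\cdot$ gives a differential $\BiCom$-algebra (the identity \eqref{eq:BicomIdentity} is just associativity). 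Under this identification the operation $a\star b=d(a)b$ is precisely the operation $x\star y=d(x)\odot y$ of Theorem~\ref{thm:BiComDer-GDual}, so every algebra in $\SNP$ is a $\GD^!$-algebra. Hence the variety generated by $\SNP$ is contained in the variety of $\GD^!$-algebras.

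For the reverse inclusion I would use Theorem~\ref{thm:SGD!=GD!} together with the explicit basis \eqref{eq:SDG!-Basis}. By Theorem~\ref{thm:SGD!=GD!}, the free $\GD^!$-algebra $\GD^!\<X\>$ embeds into the free differential $\BiCom$-algebra $\BiCom\Der\<X,d\>$ via the map $\varphi$ (with $\varphi(u\star v)=d(\varphi(u))\odot\varphi(v)$, $\varphi(u\ast v)=\varphi(u)\ast\varphi(v)$), and its image is spanned by the monomials \eqref{eq:SDG!-Basis}. The crucial structural fact visible in \eqref{eq:SDG!-Basis} is that in every such monomial $x_1\ast\dots\ast x_t\ast(y_1\odot\dots\odot d^{r_k}(y_k))$, the $\ast$-operation is only ever applied at the outermost level and never nested inside a $\odot$-product or under a~$d$. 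This means that if in $\BiCom\Der\<X,d\>$ we were to \emph{identify} $\ast$ with $\odot$ — i.e. pass to the free differential commutative associative algebra $\Com\Der\<X,d\>$ — the subspace spanned by \eqref{eq:SDG!-Basis} would map injectively, because the words \eqref{eq:SDG!-Basis} remain distinct monomials of $\Com\Der\<X,d\>$ (the ``$t$ leftmost letters'' are recovered as those factors of degree~$0$ in~$d$, and the basis conditions $x_1\le\dots\le x_t\le y_1\le\dots\le y_i$, $(r_{i+1},y_{i+1})\le\dots$ are preserved). Therefore the composite map $\GD^!\<X\>\xrightarrow{\varphi}\BiCom\Der\<X,d\>\to\Com\Der\<X,d\>$ is still injective, and it sends $u\star v\mapsto d(u)v$, $u\ast v\mapsto uv$, which is exactly the structure \eqref{eq:ComDerOp}. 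Thus $\GD^!\<X\>$ embeds into an algebra from $\SNP$, so $\GD^!\<X\>$ itself lies in the variety generated by $\SNP$; since $X$ was an arbitrary (countable) set of free generators, the whole variety of $\GD^!$-algebras is generated by $\SNP$.

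Combining the two inclusions gives the corollary: the variety generated by $\SNP$ (equivalently, the class of all homomorphic images of algebras in $\SNP$, since $\SNP$ is already closed under subalgebras and products in the relevant sense — or rather, one passes to the generated variety and then notes homomorphic images suffice because free objects inject) coincides with the variety of $\GD^!$-algebras. I expect the only genuinely delicate point to be the injectivity claim in the middle paragraph, namely that collapsing $\ast$ to $\odot$ does not destroy linear independence of the basis \eqref{eq:SDG!-Basis}; this needs the remark from the proof of Theorem~\ref{thm:SGD!=GD!} that in the normal form one always has $s_1=0$ and $x_t\le y_1$, so that after the collapse one can still canonically read off which factors came from the ``$\ast$-part'' versus the ``$\odot$-part''. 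Everything else is bookkeeping with the already-established normal forms and with the uniqueness of the differential-polynomial presentation of $\RNov\<X\>$ inside $\Com\Der\<X,d\>$ quoted from~\cite{DzhLowf}.
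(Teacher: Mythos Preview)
Your proof is correct and follows essentially the same route as the paper: both argue that the composite $\GD^!\<X\>\hookrightarrow\BiCom\Der\<X,d\>\to\Com\Der\<X,d\>$ (collapsing $\ast$ and $\odot$) is injective by checking that the basis \eqref{eq:SDG!-Basis} maps to linearly independent commutative differential monomials. The one point you leave slightly implicit --- how to recover $t$ from the collapsed monomial, since the degree-$0$ factors comprise both $x_1,\dots,x_t$ and $y_1,\dots,y_i$ --- is settled by the weight count (number of factors minus total $d$-degree equals $t+1$), which is precisely what the paper records as Corollary~\ref{eq:ComDerWeight}.
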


In other words, the defining identities of $\GD^!$ and their corollaries exhaust all identities that hold for operations
\eqref{eq:ComDerOp} on commutative differential algebras.

\begin{proof}
Consider the homomorphism 
of $\GD^!$-algebras 
\[
\psi : \GD^!\<X\> \to \Com\Der\<X,d\>
\]
defined as follows:
$\psi(x)=x$ for $x\in X$,
$\psi(f*g)=\psi(f)\psi(g)$,
$\psi(f\star g)= d(\psi(f))\psi(g)$.
We are interested in the kernel of $\psi $.

By Theorem~\ref{thm:SGD!=GD!}, $\GD^!\<X\>\subset \BiCom\Der\<X,d\> $.
Therefore, $\psi$ is the restriction of the following 
homomorphism of differential $\BiCom$-algebras:
\[
\varphi: \BiCom\Der\<X,d\> \to \Com\Der\<X,d\>,
\]
$\varphi(x)=x$ for $x\in X$,
$\varphi(f*g)=\varphi(f\odot g)=\varphi(f)\varphi(g)$.

Suppose $X$ is equipped with a linear order and $w$ is a monomial in $\BiCom\Der\<X,d\>$ of the form 
\eqref{eq:SDG!-Basis}. Then 
\[
\varphi(w) = x_1\dots x_ty_1\dots y_id^{r_{i+1}}(y_{i+1})\dots d^{r_k}(y_k),
\]
where
$x_1\le \dots \le x_t \le y_1\le \dots \le y_i$,
$r_{i+1}>0$,
$(r_{i+1},y_{i+1})\le \dots \le (r_k,y_k)$
lexicographically. 
Therefore, the images of the basic monomials of $\GD^!\<X\>$ are linearly independent, 
so 
\[
\mathrm{Ker}\,\psi = GD^!\<X\>\cap \mathrm{Ker}\,\varphi = \{0\}.
\]
\end{proof}

In particular, we may find a simple form for a basis of the free algebra $\GD^!\<X\>$.

Let $w$ be a monomial in $\Com\Der\<X,d\>$. Define the weight of $w$ by induction as follows:
 $\wt(x)=-1$, $x\in X$; $\wt(uv)=\wt(u)+\wt(v)$; $\wt(d(u))=\wt(u)+1$.

\begin{corollary}\label{eq:ComDerWeight}
A linear basis of $\GD^!\<X\>$ consists of all monomials in $\Com\Der\<X,d\>$ 
of weight $\le -1$.
\end{corollary}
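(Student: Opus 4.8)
The plan is to exploit the injective homomorphism $\psi\colon \GD^!\<X\>\to \Com\Der\<X,d\>$ constructed in the proof of Corollary~\ref{cor:GD-SNP}, and to show that $\psi$ maps the basis \eqref{eq:SDG!-Basis} of $\GD^!\<X\>$ bijectively onto the set $M$ of all monomials of $\Com\Der\<X,d\>$ of weight $\le -1$. Recall (Lemma~\ref{lem:VarDerFree}) that $\Com\Der\<X,d\>\simeq \Com\<d^\omega X\>$, so a monomial of $\Com\Der\<X,d\>$ is a commutative product $d^{s_1}(z_1)\cdots d^{s_m}(z_m)$ with $z_j\in X$, $s_j\ge 0$, $m\ge 1$, of weight $s_1+\dots+s_m-m$, and these monomials form a linear basis of $\Com\Der\<X,d\>$; in particular $M$ is linearly independent. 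First I would record that $\psi$ sends a $\GD^!$-monomial to a single $\Com\Der$-monomial (since $\psi(u* v)=\psi(u)\psi(v)$ and $\psi(u\star v)=d(\psi(u))\psi(v)$), and compute the weight of $\psi$ on the basis monomial \eqref{eq:SDG!-Basis}: using the description of $\varphi=\psi$ on such monomials from the proof of Corollary~\ref{cor:GD-SNP}, one gets
\[
\psi(f)=x_1\cdots x_t\,y_1\cdots y_i\,d^{r_{i+1}}(y_{i+1})\cdots d^{r_k}(y_k),
\]
of weight $-(t+i)+\sum_{j>i}(r_j-1)=-t+\bigl(\textstyle\sum_{j>i}r_j-k\bigr)=-t-1\le -1$. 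Hence $\psi(\GD^!\<X\>)$ lies in the span of $M$, and the weights $-1,-2,-3,\dots$ all occur.

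The substantive step is the converse: every $u=d^{s_1}(z_1)\cdots d^{s_m}(z_m)\in M$ is $\psi(f)$ for a unique basis monomial $f$ of the form \eqref{eq:SDG!-Basis}. Order the factors of $u$ so that the undifferentiated ones ($s_j=0$) come first, say $z_1\le\dots\le z_c$, followed by the differentiated ones listed lexicographically in $(s_j,z_j)$; set $k'=m-c$ and $R=\sum_{j>c}s_j$. Since a product of purely differentiated factors has weight $\ge 0$ while $\wt(u)\le -1$, we have $c\ge 1$. I would then take $f$ to be the monomial \eqref{eq:SDG!-Basis} whose differentiated part is exactly the differentiated part of $u$, whose $y_1\le\dots\le y_i$ are the last $i$ of $z_1,\dots,z_c$ and whose $x_1\le\dots\le x_t$ are the first $t$, where $i:=R-k'+1$ and $t:=c-i$. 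All ordering and exponent constraints of \eqref{eq:SDG!-Basis} then hold by construction and $\psi(f)=u$; the only point is that $0\le i\le c$: the inequality $i\ge 1$ follows from $R\ge k'$, and $i\le c$ is equivalent to $R-m\le -1$, i.e.\ to $\wt(u)\le -1$ — this is the crux. Conversely $t=-\wt(u)-1\ge 0$, and the construction visibly inverts $f\mapsto\psi(f)$.

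Putting the two directions together, $\psi$ restricts to a bijection between the basis \eqref{eq:SDG!-Basis} of $\GD^!\<X\>$ and the linearly independent set $M$, so $M$ is a basis of $\psi(\GD^!\<X\>)\simeq\GD^!\<X\>$, which is the assertion. I expect the only real obstacle to be the bookkeeping in the converse step: one must see that ``weight $\le -1$'' is precisely the condition ensuring that $u$ has enough undifferentiated factors to absorb the number needed inside an $\odot$-block of weight exactly $-1$, the surplus undifferentiated factors becoming the outer $*$-factors.
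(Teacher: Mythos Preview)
Your proposal is correct and is precisely the argument the paper leaves implicit: the paper states this corollary without proof, expecting the reader to combine the injectivity of $\psi$ (established in Corollary~\ref{cor:GD-SNP}) with the explicit description of the basis \eqref{eq:SDG!-Basis}, and you have carried out exactly that combinatorial bookkeeping. The key observation you make --- that for a monomial $u$ of weight $\le -1$ the number $t=-\wt(u)-1$ determines uniquely how many of the undifferentiated letters must serve as outer $*$-factors, leaving the remaining $i=R-k'+1\ge 1$ to complete an $\odot$-block of weight exactly $-1$ --- is the intended content of the corollary.
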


\begin{remark}
For unital $\GD^! $-algebras, a more precise result was obtained in \cite{BCZ-NP}: 
if $V$ is a $\GD^!$-algebra with an identity 
 $e$ relative to the operation $*$ then $V$ coincides with an associative commutative algebra 
 $A$
 with a derivation $d\in \Der (A)$
 equipped with the operations 
 $x\ast y = xy$, $x\star y = d(x)y$. 
\end{remark}

\section{Derived identities}\label{sec:Derived}

Let $\mathcal P$ and $\mathcal Q$ be two  operads. Then the family of spaces
$(\mathcal P(n)\otimes \mathcal Q(n))_{n\ge 1}$ is an operad relative to 
the componentwise composition and symmetric group action.
This operad, denoted $\mathcal P\otimes \mathcal Q$, is 
known as the {\em Hadamard product} of $\mathcal P$ and $\mathcal Q$. 
If $\mathcal P$ and $\mathcal Q$ are binary operads then
$\mathcal P\otimes \mathcal Q$
 may  be non-binary. The sub-operad of $\mathcal P\otimes \mathcal Q$ generated by 
 $\mathcal P(2)\otimes \mathcal Q(2)$ is known as the {\em Manin white product} of $\mathcal P$ and $\mathcal Q$, 
  it is denoted $\mathcal P\circ \mathcal Q$ \cite{GK94}. 
 
\begin{example}\label{exmp:Lie-Nov}
 The operad $\Lie \circ \Nov$ is isomorphic to the operad governing the class of all 
 algebras ({\em magmatic} operad).
\end{example}

Indeed, both $\Lie $ and $\Nov $ are quadratic operads, and so is $\Lie\circ \Nov$ \cite{GK94}. 
Let $[x_1,x_2]$ and $x_1\circ x_2$ be the generators of $\Lie$ and $\Nov$. 
It is enough to find the defining identities of $\Lie \circ \Nov$ that are quadratic with 
respect to the generators. 

Identify $[x_1,x_2]\otimes (x_1\circ x_2)$ with $[x_1\prec x_2]$, then 
$[x_1,x_2]\otimes (x_2\circ x_1)
= ([x_2,x_1]\otimes (x_1\circ x_2))^{(12)}
= -[x_2\prec x_1]$. 
Hence, $(\Lie\circ \Nov)(n)$ is an image of the space $M_n(X)$ 
of all multilinear non-associative polynomials of degree $n$ in $X=\{x_1,x_2,\dots \}$ relative to the 
operation $[\cdot \prec \cdot ]$. Calculating the  compositions in the componentwise way, we obtain
\[
\begin{gathered}
m_1= [[x_1\prec x_2]\prec x_3] = [[x_1x_2]x_3]\otimes (x_1\circ x_2)\circ x_3, \\
m_2= [x_1\prec [x_2\prec x_3]] = [x_1[x_2x_3]]\otimes x_1\circ (x_2\circ x_3), 
\end{gathered} 
\]
It remains to find the intersection of the $S_3$-submodule generated by $m_1$ and $m_2$ 
in $M_3(X)\otimes M_3(X)$ with the kernel of the projection 
$M_3(X)\otimes M_3(X)\to \Lie(3)\otimes \Nov(3)$. Straightforward calculation
shows the intersection is zero. Hence, the operation $[\cdot \prec \cdot]$
satisfies no identities.

\begin{example}\label{exmp:As-Nov}
 The operad $\As \circ \Nov$ is generated by 4-dimensional space $(\As\circ \Nov)(2)$
 spanned by  
 \[
 \begin{aligned}
 (x_1\prec x_2) &= x_1x_2 \otimes x_1\circ x_2,  \quad
 (x_2\prec x_1) &= x_2x_1 \otimes x_2\circ x_1,  \\
 (x_1\succ x_2), &= x_1x_2 \otimes x_2\circ x_1,  \quad
 (x_2\succ x_1), &= x_2x_1 \otimes x_1\circ x_2,  \\
 \end{aligned}
 \]
 relative to the following identities:
 \begin{gather}
  (x_1\succ x_2)\prec x_3 - x_1\succ (x_2\prec x_3)=0, \label{eq:L-alg} \\
  (x_1\prec x_2)\prec x_3 -x_1\prec (x_2\succ x_3)+ (x_1\prec x_2)\succ x_3 - x_1\succ (x_2\succ x_3)=0.
								  \label{eq:LodDer}
 \end{gather}
\end{example}

This result may be checked with a straightforward computation. 
Namely, one has to find the intersection of the $S_3$-submodule in $M_3(X)\otimes M_3(X)$
generated by 
\[
\begin{gathered}
 (x_1\prec x_2)\prec x_3 = (x_1x_2)x_3\otimes (x_1x_2)x_3, \quad
 x_1\succ (x_2\succ x_3) = x_1(x_2x_3)\otimes (x_3x_2)x_1, \\
 (x_1\succ x_2)\prec x_3 = (x_1x_2)x_3\otimes (x_2x_1)x_3, \quad
 x_1\succ (x_2\prec x_3) = x_1(x_2x_3)\otimes (x_2x_3)x_1, \\
 (x_1\prec x_2)\succ x_3 = (x_1x_2)x_3 \otimes x_3(x_1x_2), \quad
 x_1\prec (x_2\succ x_3) = x_1(x_2x_3)\otimes x_1(x_3x_2), \\
 x_1\prec (x_2\prec x_3) = x_1(x_2x_3)\otimes x_1(x_2x_3), \quad
 (x_1\succ x_2)\succ x_3 = (x_1x_2)x_3\otimes x_3(x_2x_1)
\end{gathered}
\]
with the kernel of the projection $M_3(X)\otimes M_3(X)\to \As(3)\otimes \Nov(3)$.

Given a (non-associative) algebra $A$ with a derivation $d$, denote by 
$A^{(d)}$ the same linear space $A$ considered as a system with two binary 
linear operations of multiplication  $\prec $ and $\succ $ defined by
\[
 x\prec y = xd(y), \quad x\succ y = d(x)y,\quad x,y\in A.
\]
In the same settings, denote by $A^{d*}$ the system $(A,\prec, \succ, \cdot)$, 
where $\cdot $ is the initial multiplication on~$A$.

\begin{definition}\label{defn:Derived}
Let $\Var $ be a multilinear variety of algebras.
A non-associative polynomial $f(x_1,\dots, x_n)$ in two operations
of multiplication $\prec $ and $\succ $ is called 
a {\em derived identity} of $\Var $ if for every $A\in \Var $ and for every derivation 
$d\in \Der (A)$ the algebra $A^{(d)}$ satisfies the identity $f(x_1,\dots ,x_n)=0$.
A {\em generalized derived identity} is a polynomial in three operations 
$\prec$, $\succ $, and $\cdot $ that turns into an identity 
on  $A^{d*}$ for every $A\in \Var $ and for every $d\in \Der (A)$.
\end{definition}

Obviously, the set of all (generalized) derived identities is a T-ideal of the algebra of non-associative 
polynomials in two (three) operations.

For example,  $(x\succ y)=(y\prec x)$ is a derived identity of $\Com $. 
Moreover, the operation $(\cdot \prec \cdot )$ satisfies
the axioms of Novikov algebras \eqref{eq:LSymm} and \eqref{eq:RComm}. 
It was actually shown in \cite{DzhLowf} that the entire T-ideal of derived 
identities of $\Com $ is generated by these identities. Similarly, 
Corollary \ref{cor:GD-SNP} implies that 
the identities of a $\GD^!$-algebra \eqref{eq:NP-1}--\eqref{eq:3}
present the complete list of generalized derived identities of 
$\Com $ relative to the following change of notations:
\[
 x*y = xy, \quad x\star y = x\succ y = y\prec x.
\]

\begin{remark}
For the variety of associative algebras, it was mentioned in 
\cite{Lod10} that \eqref{eq:L-alg} and \eqref{eq:LodDer} are derived identities of~$\As $.
\end{remark}

Our aim in this section is to show that (generalized) derived identities of $\Var $ 
are exactly the same as defining relations of $\Var \circ \Nov$ ($\Var \circ \GD^!$).
We will focus on the proof of the description of generalized derived identities.

Let $\mathcal N_{\Var }$ be the class of all differential $\Var $-algebras with one locally nilpotent 
derivation, i.e., for every $A\in \mathcal N_{\Var}$ and for every $a\in A$ there 
exists $n\ge 1$ such that $d^n(a)=0$. 

Note that $\mathcal N_{\Var }$ and the entire class $\Var\Der$ satisfy the same 
(differential) identities: we prove that within the following statement.

\begin{lemma}\label{lem:LocNilp}
Suppose $f=f(x_1,\dots, x_n)$ is a multilinear identity that 
holds on $A^{(d*)}$ for all $A\in \mathcal N_{\Var }$. 
Then $f$ is a generalized derived identity of $\Var $.
\end{lemma}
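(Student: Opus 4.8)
The plan is to reduce the general case to the locally nilpotent case by a standard "polynomial truncation" argument. Suppose $f=f(x_1,\dots,x_n)$ holds on $A^{(d*)}$ for every $A\in\mathcal N_{\Var}$; I want to show it holds on $B^{(d*)}$ for an arbitrary $B\in\Var$ with an arbitrary derivation $d\in\Der(B)$. Since $f$ is multilinear, it suffices to verify $f(b_1,\dots,b_n)=0$ for fixed elements $b_1,\dots,b_n\in B$. The value $f(b_1,\dots,b_n)$, computed in $B$ via the substitutions $x\prec y=xd(y)$, $x\succ y=d(x)y$, and $x\cdot y=xy$, is a fixed element of $B$ that involves only finitely many applications of $d$ to $b_1,\dots,b_n$ and finitely many products. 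Hence only a finitely generated (as a differential algebra) subobject is relevant, and inside it only finitely many of the elements $d^s(b_i)$ actually occur in the expression for $f(b_1,\dots,b_n)$.

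The key step is to replace $d$ by a locally nilpotent derivation without changing the value $f(b_1,\dots,b_n)$. First I would pass to the free differential $\Var$-algebra: by Lemma~\ref{lem:VarDerFree}, $\Var\Der\<x_1,\dots,x_n,d\>\cong\Var\<d^\omega\{x_1,\dots,x_n\}\>$, the free $\Var$-algebra on the countable set of symbols $d^s(x_i)$. It is enough to prove $f=0$ in this free object, since it maps onto the subalgebra of $B^{(d*)}$ generated by $b_1,\dots,b_n$. Now $f(x_1,\dots,x_n)$, expanded via \eqref{eq:SucPrec} and the initial product, is an element of $\Var\<d^\omega X\>$ supported on finitely many generators $d^s(x_i)$ with $s\le N$ for some $N$. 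Consider the differential $\Var$-algebra $\Var\<d^\omega X\>/I_N$, where $I_N$ is the ideal generated by all $d^{N+1}(x_i)$ together with all their $d$-images — equivalently, set $d^{N+1}(x_i)=0$ and propagate. This quotient carries a well-defined derivation $\bar d$ (the relations $d^{s+1}(x_i)=d(d^s(x_i))$ are respected), and $\bar d$ is locally nilpotent because every element is a polynomial in the finitely many classes $\overline{d^s(x_i)}$, $s\le N$, and $\bar d$ shifts $s\mapsto s+1$ while killing everything with $s>N$. So $\Var\<d^\omega X\>/I_N\in\mathcal N_{\Var}$, and by hypothesis $f$ vanishes there.

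Finally I would observe that the image of the specific element $f(x_1,\dots,x_n)\in\Var\<d^\omega X\>$ under the quotient map $\Var\<d^\omega X\>\to\Var\<d^\omega X\>/I_N$ is \emph{injective on the relevant span}: $f(x_1,\dots,x_n)$ only involves generators $d^s(x_i)$ with $s\le N$, and the quotient map is an isomorphism on the subspace of $\Var\<d^\omega X\>$ spanned by $\Var$-monomials in $\{d^s(x_i):s\le N\}$, since $I_N$ is spanned by $\Var$-monomials that each contain at least one factor $d^s(x_i)$ with $s>N$ (this uses multilinearity of the defining identities of $\Var$, so that the $T$-ideal respects the grading by the multiset of generators appearing — the same fact used to check $d$-invariance in the proof of Lemma~\ref{lem:VarDerFree}). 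Therefore $f(x_1,\dots,x_n)=0$ already in $\Var\<d^\omega X\>=\Var\Der\<X,d\>$, which is exactly the assertion that $f$ is a generalized derived identity of $\Var$.

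The main obstacle is the bookkeeping in the last step: one must argue carefully that cutting off $d$ at level $N$ introduces no spurious collapse among the monomials actually present in $f$. This hinges on choosing $N$ large enough (larger than any exponent of $d$ appearing after fully expanding $f$ via \eqref{eq:SucPrec}) and on the fact that, for a multilinear variety, the defining $T$-ideal of $\Var\<d^\omega X\>$ is homogeneous in each generator $d^s(x_i)$ separately, so that the truncation ideal $I_N$ meets the "low-level" subspace trivially. Everything else is routine.
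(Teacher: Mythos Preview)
Your argument is correct and is essentially the same truncation argument the paper uses. The only cosmetic difference is that the paper pins down the cutoff explicitly: since $f$ is multilinear of degree~$n$, expanding it via \eqref{eq:SucPrec} produces at most $n-1$ occurrences of $d$, so one may take $N=n-1$ and work with the quotient by $d^{n}(x)=0$; the paper also phrases the ``no spurious collapse'' step inside the free magmatic algebra $M\<d^\omega X\>$ rather than $\Var\<d^\omega X\>$, writing the kernel as $T_{\Var}(d^\omega X)+N_n$ and observing that any element of this sum with $d$-degree $<n$ already lies in $T_{\Var}(d^\omega X)$.
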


\begin{proof}
Consider the free differential $\Var$-algebra $U_n$ generated 
by the set $X=\{x_1,x_2,\dots \}$
with one derivation $d$ modulo defining relations $d^{n}(x)=0$, $x\in X$. 

Then $U_n\in \mathcal N_{\Var }$. As a differential $\Var$-algebra, $U_n$ is a homomorphic 
image of the free magmatic algebra $M\<d^{\omega }X\>$. 
Denote by $I_n$ the kernel of the homomorphism $M\<d^{\omega }X\> \to U_n$.
As an ideal of $M\<d^{\omega }X\>$, $I_n$ is a sum of two ideals: $I_n=T_{\Var}(d^\omega X)+N_n$,
where $N_n$ is generated by $d^{n+t}(x)$, $x\in X$, $t\ge 0$. 
Note that the last relations form a $d$-invariant subset of $M\<d^\omega X\>$, 
so the ideal $N_n$ is $d$-invariant. 

If a polynomial $F\in M\<d^\omega X\>$ belongs to $I_n$ and 
its degree in $d$ is less than $n$ then $F$ belongs to $T_{\Var }(d^\omega X)$. 
If $f=f(x_1,\dots , x_n)$ is a multilinear identity in $U_n^{(d)}$ 
then the image $F$ of $f$ in $M\<d^\omega X\>$ has degree $n-1$ in $d$, 
so $F\in T_{\Var }(d^\omega )$. 
Hence, $f$ is a derived identity of $\Var $. 
\end{proof}

\begin{theorem}\label{thm:WhiteGD->GenDerived}
If a multilinear identity $f$ holds on the variety 
governed by the operad $\Var \circ \GD^!$ then 
$f$ is a generalized derived identity of $\Var $. 
\end{theorem}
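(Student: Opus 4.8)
The plan is to exhibit, for each $A \in \Var$ with a derivation $d \in \Der(A)$, an operad morphism that realizes $A^{(d*)}$ as an algebra over $\Var \circ \GD^!$; then any identity holding on all $\Var \circ \GD^!$-algebras automatically holds on $A^{(d*)}$, which is exactly the assertion that it is a generalized derived identity. By Lemma~\ref{lem:LocNilp} it suffices to treat the case where $d$ is locally nilpotent, and in fact it is enough to work with the free object $U_n = \Var\Der\langle X, d\rangle / (d^n X)$, whose underlying $\Var$-algebra is, by Lemma~\ref{lem:VarDerFree}, a quotient of $\Var\langle d^\omega X\rangle$.

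The key step is to understand why the operad $\GD^!$ appears. By Corollary~\ref{cor:GD-SNP}, the identities of $\GD^!$ are precisely the generalized derived identities of $\Com$: every $\GD^!$-algebra is a homomorphic image of a commutative differential algebra with $a*b = ab$, $a\star b = d(a)b$, and $y \prec x = x \succ y = x \star y$. In operadic language, this says that $\GD^!$ is the sub-operad of $\Com\Der$ (viewed as an operad with three binary generators $\prec$, $\succ$, $\cdot$, where the first two are built from $d$) generated by those binary operations; equivalently, $\GD^! = \Com \circ \GD^!$ on the nose, so $\GD^!$ records exactly the ``shape'' of a degree-$n$ multilinear differential monomial with one $d$ distributed among the arguments, modulo the commutative relations. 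For a general $\Var$ in place of $\Com$, a multilinear element of degree $n$ in $A^{(d*)}$ is obtained by taking a multilinear $\Var$-operation of degree $n$ (an element of $\Var(n)$), applying $d$ to exactly one of the slots (for each occurrence of $\prec$ or $\succ$) and recording the combinatorial pattern of which slot and with which sign — and that bookkeeping datum lives in $\GD^!(n)$ because the relations it must satisfy are forced by the Leibniz rule together with the fact that $d$ acts the same way regardless of which $\Var$-operation it sits inside, i.e. they are the $\Com$-part. Concretely, one builds the morphism
\[
\Phi : (\Var \circ \GD^!)(n) \longrightarrow \mathrm{End}_{A^{(d*)}}(n), \qquad p \otimes q \mapsto \big(a_1, \dots, a_n \mapsto p(d^{?}a_1, \dots, d^{?}a_n)\big),
\]
where the exponents of $d$ and the accompanying signs are dictated by $q \in \GD^!(n)$, using the expansion of a $\GD^!$-monomial as a differential commutative monomial of weight $-1$ from Corollary~\ref{eq:ComDerWeight}. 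That this is well defined — that it kills the defining relations of $\Var \circ \GD^!$ — reduces to two independent checks: relations coming from $\Var$ hold because $p$ ranges over $\Var(n)$ and $d^{?}a_i$ are just elements of $A$; relations coming from $\GD^!$ hold by Corollary~\ref{cor:GD-SNP}, since they are the $\Com$-shadow and the Leibniz rule is the only interaction of $d$ with multiplication. The Manin product $\Var \circ \GD^!$ is by definition the sub-operad generated by the binary part $\Var(2) \otimes \GD^!(2)$, so it is generated by the three operations $\prec, \succ, \cdot$, and $\Phi$ is determined by its values there, matching the definitions $x \prec y = xd(y)$, $x \succ y = d(x)y$, $x \cdot y = xy$.

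The main obstacle I anticipate is the compatibility of compositions: one must verify that $\Phi$ is genuinely an operad morphism, i.e. that composing two ``decorated'' patterns in $\Var \circ \GD^!$ and then applying $\Phi$ gives the same endomorphism as first applying $\Phi$ and then composing in $\mathrm{End}_{A^{(d*)}}$. The subtlety is that when a $\GD^!$-pattern $q$ is substituted into slot $i$ of another pattern $q'$, the derivation $d$ acting on slot $i$ of $q'$ must be pushed inside via the Leibniz rule, producing a sum of terms — and one needs this Leibniz expansion to coincide exactly with the composition rule in $\GD^!$ (and hence in $\Var \circ \GD^!$, which composes componentwise in the Hadamard product). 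This is precisely the content, again, of Corollary~\ref{cor:GD-SNP}: $\GD^!$ is closed under exactly the compositions that the Leibniz rule produces, so the check is bookkeeping rather than a genuine difficulty, but it is where the argument must be written out carefully. Once $\Phi$ is established as an operad morphism, the theorem is immediate: if $f = 0$ holds in $\Var \circ \GD^!$ then $\Phi(f) = 0$, so $f$ vanishes on every $A^{(d*)}$, i.e. $f$ is a generalized derived identity of $\Var$. \qed
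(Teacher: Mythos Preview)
Your approach is correct and takes a genuinely different route from the paper. The paper constructs, for each $A \in \mathcal N_{\Var}$, an explicit $(\Var\circ\GD^!)$-algebra $\hat A = A \otimes H$, where $H$ is the divided-powers $\GD^!$-algebra, and then embeds $A^{(d*)}$ into $\hat A$ via the ``exponential'' map $a \mapsto \sum_{s\ge 0} d^s(a)\otimes x^{(s)}$; local nilpotence is essential for this sum to be finite, which is why Lemma~\ref{lem:LocNilp} is genuinely needed there. You instead build the operad morphism $\Var\circ\GD^! \to \mathrm{End}_A$ directly, using the embedding $\GD^!\hookrightarrow \Com\Der$ (Corollary~\ref{cor:GD-SNP}) to extract the derivation exponents from each $q\in\GD^!(n)$. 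The composition check you flag as the ``main obstacle'' does go through: the Leibniz expansion of $d^{s}(p'(b_1,\dots,b_m))$ for a multilinear $\Var$-operation $p'$ applied to elements of $A$ has exactly the same multinomial shape as the Leibniz expansion of $d^s(y_1\cdots y_m)$ in $\Com\Der$, independently of $p'$, so the two sides of the operad-morphism identity match term by term. Your route thus avoids the locally nilpotent reduction entirely --- your invocation of Lemma~\ref{lem:LocNilp} in the first paragraph is superfluous --- at the cost of relying on the structural result Corollary~\ref{cor:GD-SNP} from Section~\ref{sec:DualGD}; the paper's route is more self-contained, since it only needs $H$ to be a $\GD^!$-algebra, which can be checked by hand. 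It is worth noting that your argument is essentially the mirror image of the paper's proof of the converse direction, Theorem~\ref{thm:ConverseGD}, which also exploits the inclusion $\GD^!\langle X\rangle \subset \Com\Der\langle X,d\rangle$.
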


\begin{proof}
For every $A\in \mathcal N_{\Var }$ we may construct an algebra in the variety 
$\Var \circ \GD^! $ as follows. Consider the linear space $H$ spanned by elements $x^{(n)}$, $n\ge 0$, with 
multiplication 
\[
 x^{(n)}\star x^{(m)} = \binom{n+m-1}{m} x^{(n+m-1)}, \quad x^{(n)}\ast x^{(m)} = \binom{n+m}{m} x^{(n+m)}.
\]
This is a $\GD^!$-algebra (in characteristic 0, this is just the ordinary polynomial algebra
with $x^{(n)}=x^n/n!$).
Denote $\hat A = A\otimes H$ and define operations $\prec $, $\succ $, and $\cdot $ on $\hat A$ by
\begin{gather}
 (a\otimes f)\succ (b\otimes g) = ab\otimes (f\star g), \label{eq:SuccOtimes}\\ 
 (a\otimes f)\prec (b\otimes g) = ab\otimes (g\star f), \label{eq:PrecOtimes}\\
 (a\otimes f) (b\otimes g) = ab\otimes (f\ast g), \label{eq:ProdOtimes}
\end{gather}
The algebra $\hat A$ obtained belongs to the variety governed by the operad $\Var \circ \GD^! $.
There is an injective map 
\begin{align}
 \Phi:{} & A \to \hat A, \nonumber \\
 & a\mapsto \sum\limits_{s\ge 0} d^s(a)\otimes x^{(s)}, \label{eq:A-to-hatA}
\end{align}
which preserves operations $\prec $, $\succ $, and $\cdot $. Indeed, let us check that $\Phi $ preserves $\prec $. Apply \eqref{eq:A-to-hatA} to 
$a\prec b$, $a,b\in A$:
\begin{multline}\nonumber
(a\prec b)=ad(b) \mapsto \sum\limits_{s\ge 0}  d^s(ad(b))\otimes x^{(s)} 
= \sum\limits_{s,t\ge 0} \binom{s+t}{s} d^s(a)d^{t+1}(b)\otimes x^{(s+t)} \\
= \sum\limits_{s,t\ge 0}  d^s(a)d^{t+1}(b) \otimes x^{(s)}\cdot x^{(t+1)} 
=
\left(\sum\limits_{s\ge 0} d^s(a)\otimes x^{(s)}\right)
\prec
\left(\sum\limits_{t\ge 0} d^t(a)\otimes x^{(t)}\right).
\end{multline}
Since $\Phi $ is injective, $A^{(d*)}$ is isomorphic to a subalgebra of
a $(\Var\circ \GD^! )$-algebra, 
so all 
defining relations of $\Var\circ \GD^! $ hold on $A^{(d)}$.
Lemma \ref{lem:LocNilp} completes the proof.
\end{proof}

It is easy to see that if we forget about the operation $\cdot $ on $\hat A$ then 
we obtain a proof of the following 

\begin{theorem}\label{thm:WhiteNov->Derived}
If a multilinear identity $f$ holds on the variety 
governed by the operad $\Var \circ \Nov$ then 
$f$ is a derived identity of $\Var $. \qed
\end{theorem}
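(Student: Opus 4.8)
The plan is to mimic the construction used for Theorem~\ref{thm:WhiteGD->GenDerived} verbatim, simply discarding the associative-commutative product~$\ast$ and working only with the right Novikov product~$\star$. Recall from Example~\ref{exmp:As-Nov} and the surrounding discussion that a $\Nov$-algebra is obtained from a differential commutative algebra via $x\star y = d(x)y$; the same divided-power space $H$ with basis $x^{(n)}$, $n\ge 0$, and product $x^{(n)}\star x^{(m)} = \binom{n+m-1}{m} x^{(n+m-1)}$ already appears in the proof above, and \emph{that product alone} makes $H$ a Novikov algebra (it is the divided-power truncation of the polynomial algebra with $d = \partial$). So the first step is to isolate this $H$, now regarded purely as a $\Nov$-algebra, and to form $\hat A = A\otimes H$ with the single operation
\[
(a\otimes f)\succ (b\otimes g) = ab\otimes (f\star g),\qquad
(a\otimes f)\prec (b\otimes g) = ab\otimes (g\star f).
\]
By the componentwise-composition description of the Manin white product, $\hat A$ lies in the variety governed by $\Var\circ \Nov$: the operations on $A\otimes H$ are literally the Hadamard tensor of the $\Var$-operations on $A$ with the $\Nov$-operations on $H$, and everything is generated in arity~2.

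The second step is the embedding. For $A\in \mathcal N_{\Var}$ with its locally nilpotent derivation~$d$, the map
\[
\Phi: A \to \hat A,\qquad a\mapsto \sum_{s\ge 0} d^s(a)\otimes x^{(s)}
\]
is well defined (the sum is finite by local nilpotence) and injective, since the $s=0$ component recovers $a\otimes x^{(0)}$. The same Leibniz-plus-binomial computation as in the proof of Theorem~\ref{thm:WhiteGD->GenDerived} shows $\Phi$ preserves $\prec$ and $\succ$; concretely, $\Phi(a\prec b) = \Phi(ad(b))$ unfolds via $d^s(ad(b)) = \sum_t \binom{s}{t} d^{s-t}(a) d^{t+1}(b)$ into exactly $\Phi(a)\prec\Phi(b)$, using $x^{(s)}\star x^{(t+1)} = \binom{s+t}{t} x^{(s+t)}$ after reindexing, and symmetrically for $\succ$. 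Hence $A^{(d)}$ embeds into a $(\Var\circ\Nov)$-algebra, so every defining relation of $\Var\circ\Nov$ holds on $A^{(d)}$ for all $A\in\mathcal N_{\Var}$.

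The third and final step is to promote this from locally nilpotent derivations to arbitrary ones, which is precisely the content of Lemma~\ref{lem:LocNilp} — or rather its evident analogue with ``generalized derived identity'' replaced by ``derived identity'' and $A^{(d*)}$ by $A^{(d)}$, obtained by the same argument (the truncation $d^n(x)=0$ forces the image of a multilinear degree-$n$ identity to have $d$-degree $n-1$, hence to lie in $T_{\Var}(d^\omega X)$). I do not expect a genuine obstacle here: the only point requiring a line of care is checking that $H$ with the product $\star$ alone really satisfies \eqref{eq:NP-2}–\eqref{eq:NP-3} (it does, being the derived structure of a commutative algebra), and that forgetting $\ast$ from the $\GD^!$-algebra $H$ leaves a $\Nov$-algebra — which is immediate since \eqref{eq:NP-2}–\eqref{eq:NP-3} are among the $\GD^!$ axioms. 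The whole proof is thus three sentences once one observes it is the $\ast$-free shadow of the preceding theorem, which is exactly what the ``\qed'' in the statement above anticipates.
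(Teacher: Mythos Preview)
Your proposal is correct and matches the paper's argument exactly: the paper states just before Theorem~\ref{thm:WhiteNov->Derived} that ``if we forget about the operation $\cdot$ on $\hat A$ then we obtain a proof'' of this theorem, which is precisely the $\ast$-free shadow you describe. Your three steps---restrict $H$ to its Novikov structure, use the same embedding $\Phi$, and invoke the $A^{(d)}$-variant of Lemma~\ref{lem:LocNilp}---are the intended ones.
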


It remains to prove the converse: why all generalized derived identities 
of $\Var $ hold on all $(\Var\circ \GD^!)$-algebras.

\begin{theorem}\label{thm:ConverseGD}
Let $f$ be a  multilinear generalized derived identity of $\Var $. 
Then $f$ holds on all $(\Var\circ \GD^!)$-algebras.
\end{theorem}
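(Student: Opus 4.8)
The plan is to establish the reverse inclusion at the level of operads: show that every multilinear generalized derived identity of $\Var$ is a consequence of the defining relations of $\Var\circ\GD^!$. Equivalently, I want to produce a surjection of operads $\Var\circ\GD^! \twoheadrightarrow D^*\Var$ and argue it is injective by a dimension count. The key observation is that the universal object carrying all generalized derived identities of $\Var$ is the free $\Var$-algebra with a generic derivation, namely $\Var\Der\<X,d\>$, which by Lemma~\ref{lem:VarDerFree} equals $\Var\<d^\omega X\>$. So I would first set up the map $\Var\circ\GD^! \to D^*\Var$ sending the generators $x_1\succ x_2$, $x_1\prec x_2$, $x_1\cdot x_2$ of $\Var\circ\GD^!$ (these live in $(\Var(2)\otimes\GD^!(2))$, which is $4$-dimensional as in Example~\ref{exmp:As-Nov}) to the corresponding derived operations; Theorem~\ref{thm:WhiteGD->GenDerived} already guarantees this is well defined, and it is surjective since the derived operations generate $D^*\Var$.

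For injectivity I would compare Hilbert series componentwise. On one side, $(\Var\circ\GD^!)(n)$ is by definition the image of $\Var(n)\otimes\GD^!(n)$ inside the Hadamard product under restriction to the degree-$2$ part; since both $\Var$ and $\GD^!$ are quadratic and $\GD^!$ is (by the results of Section~\ref{sec:DualGD}, in particular Corollary~\ref{eq:ComDerWeight}) a Koszul-type operad with an explicit monomial basis, I would argue that the Manin product "sees all of" $\Var(n)\otimes\GD^!(n)$ — this is the standard fact that for a binary operad $\mathcal Q$ whose free algebra has a basis indexed by trees with leaves decorated by a PBW-type datum (here: the weight-$\le -1$ differential-commutative monomials), the white product $\mathcal P\circ\mathcal Q$ has dimension $\dim\mathcal P(n)\cdot\dim\mathcal Q(n)$ componentwise. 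On the other side, I would compute $\dim D^*\Var(n)$ directly from $\Var\<d^\omega X\>$: a multilinear element of $A^{(d*)}$ in $x_1,\dots,x_n$ is, after substituting $x_i\prec x_j\mapsto x_i d(x_j)$ etc., a multilinear element of $\Var\<d^\omega X\>$ of total $d$-degree exactly $n-1$ in the variables $x_1,\dots,x_n$ (each $\succ$ or $\prec$ contributes one $d$, and the shape of the parenthesization together with the choice of which operation sits at each internal node records precisely the extra data). Counting these and matching with $\dim\Var(n)\cdot\dim\GD^!(n)$ should give equality.

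Concretely I would realize the comparison via the construction already in the paper: the map $\Phi\colon A\to \hat A = A\otimes H$ from Theorem~\ref{thm:WhiteGD->GenDerived}, applied to $A$ = free $\Var$-algebra with generic derivation. The image $\Phi(A)$ inside $(A\otimes H)$ — which is a $(\Var\circ\GD^!)$-algebra — is isomorphic to $A^{(d*)}$, and I would check that $\Phi(\Var\Der\<X,d\>)$, viewed as a subalgebra generated by $x_1,\dots,x_n$ under $\prec,\succ,\cdot$, is a \emph{free} $(\Var\circ\GD^!)$-algebra on $x_1,\dots,x_n$: freeness follows because a relation among the generators in the $(\Var\circ\GD^!)$-algebra would pull back, via the component-wise structure of the Hadamard product and the known bases of $\Var\<d^\omega X\>$ (Lemma~\ref{lem:VarDerFree}) and of $\GD^!$ (Section~\ref{sec:DualGD}), to a relation that must already be $0$. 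Once $\Phi$ lands in a free $(\Var\circ\GD^!)$-algebra and generates it, any generalized derived identity of $\Var$ — being a relation holding on all of $\Var\Der\<X,d\>$, equivalently vanishing on $\Phi(X)$ — is a relation in the free $(\Var\circ\GD^!)$-algebra, hence the zero element, i.e.\ a consequence of the $\Var\circ\GD^!$ relations.

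The main obstacle I anticipate is the freeness/dimension-count step: one must verify that the white product $\Var\circ\GD^!$ does not collapse below $\dim\Var(n)\cdot\dim\GD^!(n)$, i.e.\ that the Hadamard product $\Var\otimes\GD^!$ restricted to its binary part recovers everything. This is where the explicit basis of $\GD^!\<X\>$ from Theorems~\ref{thm:SDG!-weight}–\ref{thm:SGD!=GD!} and Corollary~\ref{eq:ComDerWeight} (weight-$\le -1$ monomials in $\Com\Der\<X,d\>$) is essential: it lets me match, bijectively, the multilinear monomial basis of $\GD^!(n)$ with the combinatorial data "parenthesization + assignment of $d$'s totaling $n-1$" that decorates a multilinear $\Var$-monomial inside $\Var\<d^\omega X\>$, turning the abstract operadic inequality into an honest bijection of bases. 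The Novikov-only statement (the analogue of Theorem~\ref{thm:WhiteNov->Derived} in reverse, $D\Var=\Var\circ\Nov$) then drops out by forgetting the operation $\cdot$, exactly as in the forward direction.
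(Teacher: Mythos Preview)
Your proposal has a genuine gap, and the paper's actual argument is both shorter and avoids the obstacle you flagged.

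First, a concrete error: you assert that a multilinear element of $A^{(d*)}$ in $x_1,\dots,x_n$ maps to a differential polynomial of total $d$-degree \emph{exactly} $n-1$. This is false. A degree-$n$ monomial uses $n-1$ binary operations, each of which is $\prec$, $\succ$, or $\cdot$; only $\prec$ and $\succ$ contribute a $d$, while $\cdot$ contributes none. So the $d$-degree ranges over $0,1,\dots,n-1$. Your proposed bijection between the monomial basis of $\GD^!(n)$ and ``parenthesization plus assignment of $d$'s totaling $n-1$'' therefore does not match the object you want to count.

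Second, the step you yourself identify as the main obstacle --- that $\dim(\Var\circ\GD^!)(n)=\dim\Var(n)\cdot\dim\GD^!(n)$, i.e.\ that the white product fills up the Hadamard product --- is neither standard nor proved. You invoke Koszulity/PBW of $\GD^!$, but even granting that, the statement depends on $\Var$ as well, and for an arbitrary multilinear variety $\Var$ there is no such general theorem to cite.

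The point is that none of this machinery is needed. The paper's proof uses only the \emph{definition} of the white product as a suboperad of the Hadamard product: since $(\Var\circ\GD^!)(n)\hookrightarrow\Var(n)\otimes\GD^!(n)$, it suffices to check that a candidate identity $f$ vanishes on the single algebra $\Var\<X\>\otimes\GD^!\<X\>$ (with operations \eqref{eq:SuccOtimes}--\eqref{eq:ProdOtimes}) evaluated at the diagonal generators $x_i\otimes x_i$. Now one uses Corollary~\ref{cor:GD-SNP} in the simplest possible way: $\GD^!\<X\>$ embeds into $C^{(d*)}$ for $C=\Com\Der\<X,d\>$, hence $\Var\<X\>\otimes\GD^!\<X\>$ embeds into $(\Var\<X\>\otimes C)^{(D*)}$ with $D=\mathrm{id}\otimes d$. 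But $\Var\<X\>\otimes C$ is a $\Var$-algebra (commutative tensor factor), so every generalized derived identity of $\Var$ vanishes on it. Done --- no dimension count, no freeness claim, no comparison of white and Hadamard products beyond the tautological inclusion.

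Your alternative route via $\Phi$ is using Theorem~\ref{thm:WhiteGD->GenDerived} in the wrong direction: showing that $\Phi(\Var\Der\<X,d\>)$ is a \emph{free} $(\Var\circ\GD^!)$-algebra is precisely the injectivity you are trying to prove, so this is circular unless backed by the dimension count, which as noted is both flawed and unnecessary.
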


\begin{proof}
Let $X=\{x_1,x_2,\dots \}$, and let $f(x_1,\dots , x_n)$ be a non-associative multilinear polynomial 
in three operations of multiplication $\cdot$, $\prec$, and $ \succ$. 
By the construction of the Manin white product, if an identity 
holds on $\Var\<X\>\otimes \GD^!\<X\>$ equipped with operations 
\eqref{eq:SuccOtimes}--\eqref{eq:ProdOtimes} then it holds on all 
$(\Var\circ \GD^!)$-algebras.

By Corollary \ref{eq:ComDerWeight}, $\GD^!\<X\>$ is a subalgebra of $C^{(d*)}$ for 
the free differential commutative algebra $C=\Com\Der\<X,d\>$. Therefore, 
$\Var\<X\>\otimes \GD^!\<X\>$ is a subalgebra of $(\Var\<X\>\otimes C)^{(D*)}$,
where $D(u\otimes f)=u\otimes d(f)$ for $u\in \Var\<X\>$, $f\in C$.
Since $A=\Var\<X\>\otimes C \in \Var$, the identity $f(x_1,\dots, x_n)=0$ 
holds on $A^{(D*)}$, so it holds on $\Var\<X\>\otimes \GD^!\<X\>$.
\end{proof}

Similarly, we have 

\begin{theorem}\label{thm:ConverseNov}
Let $f$ be a  multilinear derived identity of $\Var $. Then $f$ holds on all $(\Var\circ\Nov)$-algebras.
\end{theorem}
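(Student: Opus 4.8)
\textbf{Proof proposal for Theorem \ref{thm:ConverseNov}.}
The plan is to mimic the proof of Theorem \ref{thm:ConverseGD}, simply dropping the initial multiplication $\cdot$ from consideration and using the operad $\Nov$ in place of $\GD^!$. First, I would recall that by the construction of the Manin white product, it suffices to verify that $f$ vanishes on the particular $(\Var\circ\Nov)$-algebra $\Var\<X\>\otimes\Nov\<X\>$ equipped with the operations obtained componentwise from the generators; if $f$ is an identity there, then it is an identity on every $(\Var\circ\Nov)$-algebra. Concretely, with $\prec$ and $\succ$ defined on $\Var\<X\>\otimes\Nov\<X\>$ by $(a\otimes u)\succ(b\otimes v)=ab\otimes(u\circ v)$ and $(a\otimes u)\prec(b\otimes v)=ab\otimes(v\circ u)$ (matching the identification $[x_1,x_2]$-free analogue used in Example \ref{exmp:As-Nov}), we reduce to checking $f$ on this single algebra.

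The key step is to realize $\Var\<X\>\otimes\Nov\<X\>$ as a subalgebra of $A^{(D)}$ for a suitable differential $\Var$-algebra $A$. By the result of \cite{BCZ-NP} recalled in the introduction — every Novikov algebra embeds into a differential associative and commutative algebra — the free Novikov algebra $\Nov\<X\>$ is a subalgebra of $C^{(d)}$ for $C=\Com\Der\<X,d\>$ relative to $u\star v=d(u)v$ (here I use the right-Novikov convention consistent with $x\star y=d(x)\odot y$ as in the $\GD^!$ case, adjusting $\prec/\succ$ accordingly). Hence $\Var\<X\>\otimes\Nov\<X\>$ embeds into $\Var\<X\>\otimes C$, which carries the derivation $D(a\otimes g)=a\otimes d(g)$; the operations $\prec,\succ$ on the tensor product are precisely the derived operations of $D$. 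Setting $A=\Var\<X\>\otimes C$, we have $A\in\Var$ since $\Var$ is a multilinear variety and $C$ is commutative (so $A$ is obtained from an algebra in $\Var$ by extension of scalars by a commutative algebra, which preserves all multilinear identities), and therefore $A$ is a differential $\Var$-algebra. Because $f$ is a derived identity of $\Var$, it holds on $A^{(D)}$, hence on the subalgebra $\Var\<X\>\otimes\Nov\<X\>$, and we are done.

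The only mild obstacle is bookkeeping of conventions: the identification of the generators of $\Nov\<X\>$ inside $\Var\<X\>\otimes\Nov\<X\>$ must be matched correctly with the operations $\prec$ and $\succ$ of $A^{(D)}$, just as the two choices $u\circ v$ versus $v\circ u$ had to be tracked in Example \ref{exmp:As-Nov}; but this is the same routine check already carried out there and in Theorem \ref{thm:ConverseGD}. No new ideas beyond the embedding theorem for Novikov algebras and the observation that tensoring with a commutative algebra preserves multilinear identities are needed.
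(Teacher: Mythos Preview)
Your proposal is correct and follows exactly the approach the paper intends: the paper's own proof is simply the phrase ``Similarly, we have,'' meaning one repeats the argument of Theorem~\ref{thm:ConverseGD} verbatim with $\GD^!$ replaced by $\Nov$ and the operation $\cdot$ dropped, using the embedding of $\Nov\langle X\rangle$ into $\Com\Der\langle X,d\rangle$ (from \cite{DzhLowf} or \cite{BCZ-NP}) in place of Corollary~\ref{eq:ComDerWeight}. Your remarks about extension of scalars and the left/right Novikov convention are exactly the routine checks involved, and there is nothing further to add.
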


\begin{remark}
In this section, we have considered algebras with one binary operation, so the operad $\Var $ has only one generator. 
However, Theorems~\ref{thm:WhiteGD->GenDerived}--\ref{thm:ConverseNov} are straightforward 
to generalize for algebras 
with multiple binary operations, i.e., for an arbitrary binary operad $\Var $. 
In the next sections, we apply these results to Poisson algebras.
\end{remark}

\section{Free special GD-algebra}\label{sec:SpecialGD}

Denote by $\Pois$ the variety of all Poisson algebras, and let $\Pois\Der$ stand 
for the variety of differential Poisson algebras. Theorem \ref{thm:th1} determines 
a functor from $\Pois\Der $ to $\GD$ corresponding to the following 
morphism of operads:
\[
 \begin{aligned}
  \delta :& \GD \to \Pois\Der, \\
   & x_1\circ x_2\mapsto x_1 d(x_2), \\
   & [x_1,x_2]\mapsto \{x_1,x_2\}
 \end{aligned}
\]

Let us say that a $\GD$-algebra $A$ is {\em special} if there exists $V\in \Pois$ 
such that $A$ is a subalgebra of~$V^{(\delta )}$.
Denote by $\SGD$ the variety generated by special $\GD$-algebras, i.e., the class 
of all homomorphic images of all special $\GD$-algebras. 
The main purpose of this section is to describe the free $\SGD$-algebra.

Let $X$ be a linearly ordered set of generators. 
Denote by $\Pois\Der\<X,d\>$ the free Poisson differential algebra with a derivation $d$.
Consider the set 
$d^\omega X =\{d^n(x)\mid n\geq 0, x\in X\}$. 
Let us consider the elements $d^n(x)\in d^\omega X$ as pairs $(n,x)$
and compare them lexicographically.
Then $\Pois\Der\<X,d\>$ is isomorphic to $\Pois\<d^\omega X\>$ as a Poisson algebra.

As a  commutative algebra, $\Pois\Der\< X,d\>$ is isomorphic to 
the symmetric algebra over the space $\Lie \< d^\omega X\>$.
Recall that a linear basis of the free Lie algebra $\Lie\<Y\>$ 
generated by a linearly ordered set $Y$ may be chosen as follows. 
An associative word $U\in Y^*$ is said to be a {\em Lyndon--Shirshov word} if for every presentation $U=U_1U_2$, $U_i\in Y^*$, we have 
$U>U_2U_1$ lexicographically. This definition goes back to \cite{CFL} and \cite{Sh58} 
(see also \cite{BC_BMS}). For every Lyndon--Shirshov word $U$ there is a unique ({\em canonical}) bracketing $[U]$ such that 
$[U]\in \Lie \<Y\>$ and the set of all Lyndon--Shirshov words with canonical bracketing forms a linear basis of $\Lie \<Y\>$.

\begin{definition}
Let $u$ be a monomial in $\Pois\Der\<X,d\>$. Define the {\em weight} 
function $\wt(u)\in \mathbb Z$ by induction 
in the following way:
\[
\begin{gathered}
\wt(x)=-1,\quad x\in X; \\
\wt(d(u)) = \wt(u)+1; \\
\wt(\{u,v\})=\wt(u)+\wt(v)+1; \\
\wt(uv)=\wt(u)+\wt(v).
\end{gathered}
\]
\end{definition}
Since all defining identities of  Poisson algebras 
are homogeneous relative to $\{\cdot,\cdot\}$ and $\cdot $, 
the function $\wt$ is well-defined.

By the general algebra arguments (see, e.g., \cite{MikhSh10}) 
$\SGD\<X\>$ is isomorphic to the subalgebra of 
$\Pois\Der\<X,d\>$ generated by $X$ relative to the operations 
$[u,v]=\{u,v\}$ and $u\circ v = ud(v)$.

Consider a linear map 
\[
\begin{gathered}
\phi : \GD\<X\> \to \SGD\<X\>\subset \Pois\Der\<X,d\>, \\
\phi(x)=x, \quad x\in X, \\
\phi([u,v])=\{\phi(u),\phi(v)\}, 
\quad \phi(u\circ v)=\phi(u) d(\phi(v)). 
\end{gathered}
\]
By Theorem \ref{thm:th1}, $\phi $ is an epimorphism of $\GD$-algebras.

For a monomial $u\in \GD\<X\>$, denote by $m(u)$ the number of Novikov multiplications in $u$ or, 
which is the same, the number of commutative multiplications in $\phi(u)$. 
For a monomial $a\in \Pois\Der\<X,d\>$, denote $D(a)=n$ the total number of derivations
$d$ in $a$. It is easy to see that $D(\phi(u))=m(u)$. 

\begin{lemma}\label{lem:Ejection}
Let $a=x_1\dots x_t [U]\in \Pois\Der\<X,d\>$, 
where $x_i\in X$, $U$ is a Lyndon--Shirshov word in $d^\omega X$, 
$D([U])=t$.
Then there exists $f\in \GD\<X\>$ such that $\phi(f)=a$.
\end{lemma}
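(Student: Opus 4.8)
The goal is to realize a Poisson differential monomial $a = x_1\cdots x_t[U]$, where $[U]$ is a canonically bracketed Lyndon--Shirshov word in $d^\omega X$ with exactly $t$ derivations occurring inside it, as the image $\phi(f)$ of some $f\in\GD\<X\>$. The key structural fact is that $\phi(u\circ v) = \phi(u)\,d(\phi(v))$, so the Novikov product is exactly the tool that \emph{produces} derivations; since $D([U])=t$ and we have precisely $t$ extra letters $x_1,\dots,x_t$ standing outside the bracket, the bookkeeping of weights (each $\circ$ raises $\wt$ by exactly the amount a $d$ would, after accounting for the consumed generator) matches up. So the plan is to induct on $t$, peeling off one commutative factor at a time by rewriting it as a Novikov multiplication against a sub-expression of $[U]$ that carries one of the derivations.

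First I would set up the induction on $t$ (equivalently on $D([U])$). The base case $t=0$ means $[U]$ has no derivations, i.e. $[U]=[V]$ for a Lyndon--Shirshov word $V$ in $X$ itself; then $[U]$ is a Lie monomial in $X$, which is manifestly in the image of $\phi$ (just bracket the same way using $[\cdot,\cdot]$ in $\GD\<X\>$), and there is nothing else to do. For the inductive step, the idea is to locate inside $[U]$ an innermost occurrence of a letter $d^{s}(y)$ with $s\ge 1$; peeling the outermost $d$ off that letter should express $[U]$ (using the Leibniz rule for $d$ relative to the Lie bracket, which holds in $\Pois\Der$) as a sum of terms in which one derivation has migrated to the outside as a Novikov multiplication of the form $w\circ(\cdots)$, or been converted into a commutative factor $x_j d(\cdots)$. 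Concretely, using $\{u,d(v)\} = d(\{u,v\}) - \{d(u),v\}$ repeatedly, one pushes a $d$ out of the bracket; a $d$ sitting in front of a whole Poisson monomial with one commutative factor $x_j$ can then be re-absorbed as $x_j \circ(\text{rest})$ via $\phi$. The inductive hypothesis, applied to the resulting sub-monomials (which now have $t-1$ external generators and one fewer derivation inside their Lyndon--Shirshov parts), finishes the step after checking that all terms produced still have the required shape $x'_1\cdots x'_{t-1}[U']$ with $D([U'])=t-1$.

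The main obstacle is the combinatorial control of the rewriting: pushing derivations out of a canonically bracketed Lyndon--Shirshov word does not, a priori, return a linear combination of canonically bracketed Lyndon--Shirshov words, so one either has to argue that the weight constraint $D([U])=t$ is preserved by all the intermediate terms (it is, since the Leibniz rules conserve the total number of $d$'s and the total degree), and that each such term again has the form required by the lemma, or one reformulates the induction to range over an auxiliary set of monomials that is visibly closed under these operations and contains both the target $a$ and everything produced along the way. I expect the cleanest route is to prove a slightly more general statement by induction on the pair $(\deg a, D)$ ordered lexicographically: every Poisson differential monomial whose total derivation count equals its number of ``free'' commutative generators lies in $\mathrm{Im}\,\phi$; the lemma is then the special case where the Lie part is a single bracketed Lyndon--Shirshov word. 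The verification that each Leibniz-rule application strictly decreases this complexity measure is the technical heart, but it is a routine (if fiddly) induction once the right invariant is isolated.
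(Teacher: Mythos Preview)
Your plan has the right intuition about weight bookkeeping, but the concrete mechanism you propose---pushing a derivation out of $[U]$ via the derivation-Leibniz rule $\{u,d(v)\}=d\{u,v\}-\{d(u),v\}$---does not lead to a terminating induction as stated. Two concrete obstructions:

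First, the ``push-out'' term $x_1\cdots x_t\, d[U'']$ (with $D([U''])=t-1$) is not directly $\phi$ of anything of the form $f\circ g$. If you take $h\in\phi^{-1}(x_1\cdots x_{t-1}[U''])$ from the inductive hypothesis and compute $\phi(x_t\circ h)=x_t\,d(x_1\cdots x_{t-1}[U''])$, the derivation hits the commutative factors as well and produces extra terms with $d(x_j)$ sitting as a commutative factor. These error terms are outside the lemma's format, and they have the same total degree and the same total~$D$, so your proposed lexicographic measure $(\deg a,D)$ does not drop.

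Second, the ``correction'' terms coming from $-\{d(u),v\}$ still have $D=t$ inside the bracket (the derivation has merely migrated to another letter), so induction on $t$ alone does not advance either. You acknowledge the bookkeeping issue but do not identify a quantity that actually decreases.

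The paper's argument uses a different Leibniz rule---the \emph{Poisson} one, $a\{b,c\}=\{ab,c\}-b\{a,c\}$---to absorb the commutative factors $x_j$ \emph{into} the Lie bracket rather than to push derivations out of it. Write $[U]=\{[U_1],[U_2]\}$ by the canonical Lyndon--Shirshov factorization, with $D([U_1])=i$; then
\[
x_1\cdots x_t\{[U_1],[U_2]\}
= x_{i+1}\cdots x_t\bigl\{x_1\cdots x_i[U_1],[U_2]\bigr\}
-\sum_{k=1}^i x_1\cdots\hat x_k\cdots x_t\,[U_1]\,\{x_k,[U_2]\}.
\]
Every summand now involves Lyndon--Shirshov words strictly shorter than~$U$ (and with no larger $D$), so a double induction on $(D([U]),|U|)$ terminates; the previously constructed preimages are substituted in as fresh variables. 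This Poisson-Leibniz absorption step is the missing idea in your plan.
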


\begin{proof}
Let us prove the statement by induction on $D([U])$ and $|U|$, where $|U|$ is the length of $U$ as of a word in $d^\omega X$.

If $D([U])=t=0$ then the desired pre-image may be obtained just by replacing 
$\{\cdot,\cdot\}$ with $[\cdot,\cdot]$. We do not distinguish notations for 
$[U]\in \Lie\<X\>\subset \Pois\Der\<X,d\>$ 
and its pre-image in $\GD\<X\>$.

Suppose $D(U)=1$. If $|U|=1$ then the result is obvious. Let $|U|=2$. Then 
\[
x_1 \{d(y_1),y_2\}=
\{x_1 d(y_1),y_2)\}-\{x_1,y_2\} d(y_1),
\]
so 
$[x_1\circ y_1,y_2]-[x_1,y_2]\circ y_1$ is a desired pre-image.

If $|U|=l>2$ then $[U]=\{[d(y_1)u],[v]\}$ for some words $u$, $|u|=i<l-1$, and $v$, $|v|=j<l$, by the definition of a Lyndon--Shirshov word ($d(y_1)$ is the greatest letter in $U$).
Assume there exists 
\[
 F_{1,j}(x_1,[W])\in \phi^{-1}(x_1 [W])
\]
for all Lyndon--Shirshov words $W$ such that $D(W)=1$, $|W|=j<l$. 
Then 
\[
x_1 [U] = x_1 \{[d(y_1)u],[v]\} = \{x_1 [d(y_1)u], [v]\} - [d(y_1)u]\{x_1,[v]\}.
\]
Therefore, 
\[
 F_{1,l}(x_1, [U]) = [F_{1,i+1}(x_1, [d(y_1)u]), [v]] - F_{1,i+1}([x_1,[v]], [d(y_1)u])
\]
is a desired pre-image of $x_1 [U]$.

Suppose $D(U)=t>1$ and $|U|=1$, i.e., $U=d^t(y)$, $y\in X$. 
Then there exists a pre-image of
$x_1\dots x_t d^t(y)$
\[
 F_{t,1}(x_1,\dots, x_t, d^t(y)) \in \Nov\<X\>\subset \GD\<X\>,
\]
as shown in \cite{DzhLowf}.

Assume $D(U)=t>1$, $|U|=l>1$, and
\[
 F_{i,j}(x_1,\dots, x_i, [W]) \in \phi^{-1} (x_1 \dots x_i [W])
\]
exists for all Lyndon--Shirshov words $W$ 
such that either $i=D(W)<t$ or $D(W)=t$, $j=|W|<l$. 
Since 
\[
 [U] = \{[d^{s}(y)u],[v] \}, \quad 0<s\le t,\ y\in X,
\]
where 
\[
 1\le D(d^s(y)u)=i\le t,\quad D(v)=t-i<t,\quad |d^{s}(y)u|=j<l, \quad |v|=q<l,
\]
we have
\begin{multline}\nonumber
 x_1 \dots x_t [U] 
= x_{i+1}\dots x_t \{x_1 \dots  x_i [d^s(y)u], [v]\}  \\
 - \sum\limits_{k=1}^i x_1 \dots \hat x_k \dots  x_t [d^{s}(y)u] \{x_k,[v]\}, 
\end{multline}
where $\hat x_k$ means that the factor $x_k$ is omitted.
By the inductive assumption, 
there exist
\[
 P\in \phi^{-1}(x_1\dots  x_i [d^s(y)u])=F_{i,j}(x_1,\dots, x_i, [d^s(y)u]) 
\]
and 
\[
 P_k\in \phi^{-1}(x_{i+1} \dots  x_t \overline{\{x_k,[v]\}}),
\]
where 
$\overline{f}$ stands for the linear combination 
of Lyndon--Shirshov words  with canonical bracketing representing a Lie polynomial $f\in \Lie\< d^\omega X\>$.
Obviously, we may choose
\begin{multline}\label{eq:IndPhi}
 \phi^{-1}(x_1 \dots x_t [U]) = 
 F_{t-i,q+1}(x_{i+1},\dots, x_t, \overline{\{z,[v]\}})|_{z=P}  \\
 -\sum\limits_{k=1}^i F_{t-i,q+1} (z,x_1,\dots, \hat x_k, \dots , x_i, [d^s(y)u])|_{z=P_k}
\end{multline}
Here we consider the variable $z$ in the right-hand side of \eqref{eq:IndPhi}
as a new one, and assume $z<x$ for all $x\in X$.
\end{proof}


\begin{theorem}\label{TheoremSGD}
Differential Poisson monomials of weight $-1$ span the special GD-algebra $\SGD\<X\>\subset \Pois\Der\<X,d\>$.
\end{theorem}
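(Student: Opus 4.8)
The plan is to prove the two inclusions separately. The easier direction is that every monomial appearing in an element of $\SGD\<X\>$ has weight $-1$; this is the ``only if'' half and it follows by an induction on the structure of $\GD\<X\>$ transported along $\phi$. Indeed, for $x\in X$ we have $\wt(\phi(x)) = \wt(x) = -1$, and if $\wt(\phi(u)) = \wt(\phi(v)) = -1$ then $\wt(\phi([u,v])) = \wt(\{\phi(u),\phi(v)\}) = -1 + (-1) + 1 = -1$ and $\wt(\phi(u\circ v)) = \wt(\phi(u)\,d(\phi(v))) = -1 + (-1+1) = -1$. Since $\phi$ is an epimorphism by Theorem~\ref{thm:th1}, every element of $\SGD\<X\>$ is a linear combination of monomials of weight $-1$ (here one should note that the Poisson relations preserve $\wt$, so passing to a canonical form in $\Pois\Der\<X,d\>$ does not disturb the weight).

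For the converse direction — that every weight-$(-1)$ monomial of $\Pois\Der\<X,d\>$ actually lies in the subalgebra $\SGD\<X\>$ — the idea is to reduce an arbitrary such monomial to the normal form handled by Lemma~\ref{lem:Ejection}. Using the PBW-type description of $\Pois\Der\<X,d\>$ recalled before Lemma~\ref{lem:Ejection}, every element of the free differential Poisson algebra is a linear combination of ``commutative words in Lie monomials'', i.e. products $W_1 W_2 \cdots W_m$ where each $W_j$ is a canonically bracketed Lyndon--Shirshov word in $d^\omega X$. One then regroups: letters $d^0(x) = x$ with no derivation contribute $-1$ to the weight, each bracketing contributes $+1$, and each commutative product of two factors contributes $0$; tracking these contributions, a weight-$(-1)$ monomial can be rewritten (modulo lower-degree terms, proceeding by induction on degree) as $a = x_1\cdots x_t\,[U]$ with $x_i\in X$ and $[U]$ a single Lyndon--Shirshov word satisfying the balance condition $D([U]) = t$ — precisely the hypothesis of Lemma~\ref{lem:Ejection}. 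More generally a weight-$(-1)$ monomial is a product of several such blocks $x_1\cdots x_{t_j}[U_j]$ with $D([U_j]) = t_j$ for each $j$; since $\SGD\<X\>$ is closed under the commutative product $\phi(u\circ$-type constructions give products too$)$, it suffices to ejecting each block, and Lemma~\ref{lem:Ejection} produces a $\GD$-preimage of each block.

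The subtle point, and the step I expect to be the main obstacle, is the bookkeeping that shows an arbitrary weight-$(-1)$ Poisson differential monomial decomposes into blocks each of which \emph{separately} satisfies the balance $D(\text{block's Lie part}) = (\text{number of bare }X\text{-letters in the block})$. A priori the derivations are distributed across the Lie factors in an uncontrolled way, and only the \emph{total} count $\sum_j D(U_j) - (\text{total bare letters})$ is forced to equal $-1 - (\text{other contributions})$; one must argue that one may always redistribute the bare $X$-letters among the Lie blocks (using commutativity of the associative product to move free generators next to whichever bracketed factor needs them) so that each block becomes individually balanced. This is where the Leibniz identity and the freeness of the Poisson structure enter: a bare letter $x$ multiplied against $[U]$ is exactly the situation $x_1 [U]$ in the induction of Lemma~\ref{lem:Ejection}, and the identity $x\{[d(y_1)u],[v]\} = \{x[d(y_1)u],[v]\} - [d(y_1)u]\{x,[v]\}$ is what lets the ejection propagate. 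Once the decomposition into balanced blocks is established, the theorem follows immediately by applying Lemma~\ref{lem:Ejection} to each block and multiplying the preimages in $\GD\<X\>$.
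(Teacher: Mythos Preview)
Your ``only if'' direction is correct and matches the paper. The converse, however, has two genuine gaps.

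\textbf{First gap: balanced blocks need not exist.} You want to rewrite a weight-$(-1)$ basis monomial $a=[X_1]\cdots[X_n]$ as a product of blocks $x_1\cdots x_{t_j}[U_j]$ with $x_i\in X$ single letters and $D([U_j])=t_j$. But the differential-free Lyndon--Shirshov factors need not be letters. Take $a=[x_1,x_2]\cdot\{d(x_3),x_4\}$: here $\wt(a)=-1$, $D(a)=1$, and the only differential-free factor is $[x_1,x_2]$, which is not a single letter. There is no bare $X$-letter to ``redistribute'', so your bookkeeping step cannot be completed as stated.

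\textbf{Second gap: you cannot multiply preimages.} Even if separate blocks could be ejected, the assertion that $\SGD\langle X\rangle$ is closed under the commutative product of $\Pois\Der\langle X,d\rangle$ is false: the product of two weight-$(-1)$ elements has weight $-2$. Equivalently, $\GD$ has no operation whose $\phi$-image is $u\cdot v$; the Novikov product gives $\phi(u\circ v)=\phi(u)\,d(\phi(v))$, not $\phi(u)\phi(v)$. So the phrase ``multiplying the preimages in $\GD\langle X\rangle$'' has no meaning, and a decomposition into $m\ge 2$ blocks each of weight $-1$ would contradict $\wt(a)=-1$.

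\textbf{What the paper does instead.} The paper inducts on $D(a)$, not on degree. Given $a=[X_1]\cdots[X_n]$ of weight $-1$ with $D([X_1])=k>0$, a counting argument shows that at least $k$ of the remaining factors $[X_2],\dots,[X_n]$ are differential-free Lie words (not necessarily letters). The key device you are missing is \emph{substitution of fresh variables}: each differential-free factor $[X_i]$ already lies in $\phi(\GD\langle X\rangle)$ (replace Poisson brackets by Lie brackets), so one may treat it as a new generator. In the enlarged alphabet those $k$ factors become genuine letters, Lemma~\ref{lem:Ejection} applies to the subproduct $[X_1]\cdots[X_{k+1}]$, and the resulting weight-$(-1)$ element is again replaced by a fresh variable $y$. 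The remaining product $y[X_{k+2}]\cdots[X_n]$ has strictly smaller $D$, so the inductive hypothesis yields a preimage $f(y)\in\GD\langle X\cup\{y\}\rangle$; substituting back gives a preimage of $a$ in $\GD\langle X\rangle$. This substitution trick is exactly what replaces your impossible ``multiply the preimages'' step.
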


\begin{proof}
Obviously,  
$\phi(f)$ is a linear combination of differential Poisson monomials of weight~$-1$ 
for every $f\in \GD\<X\>$.

Conversely, let $a$ be a monomial in $\Pois\Der\<X,d\>$ such that $\wt(a)=-1$.
We have to show that there exists $\phi^{-1}(a)\in \GD\<X\>$.
It is enough to prove it for basic elements of $\Pois\Der\<X,d\>$, 
i.e., elements of the form
\begin{equation}\label{eq:2}
a=[X_1] [X_2]\cdots [X_n],    
\end{equation}
where $X_i$ are  Lyndon--Shirshov words in $d^\omega X$, 
$[\cdot ]$ denotes the canonical bracketing,
$X_1\le \dots \le X_n$. 

Let us proceed by induction on $D(a)$. 
If $D([X_i])=k_i$ then $\wt([X_i])=k_i-1$, so $\wt([X_i])\ge -1$. 
Note that $\wt(a) = \wt([X_1])+\dots + \wt([X_n])=D(a)-n$.
Hence, if $D(a)=0$ then $n=1$ and $a=[X_1]$ is a pure Lie element in $X$
which belongs to $\GD\<X\>$.

Assume that for some $m>0$ all monomials $b\in \Pois\Der\<X,d\>$
such that $\wt(b)=-1$, $D(b)<m$ have pre-images in $\GD\<X\>$.
Suppose $D(a)=m$ then, without loss of generality, we may assume 
$D([X_1])=k>0$, $k\le m$. 
Then there must exist $k$ differential-free factors, 
say $[X_2],\dots , [X_{k+1}]$ of weight~$-1$.
Lemma \ref{lem:Ejection} allows us to replace 
$[X_1] \cdots [X_{k+1}]$ with a new variable $y$, $\wt(y)=-1$, 
find a pre-image 
$f(y) $ of $y [X_{k+2}] \cdots [X_n])\in \GD\<X\cup\{y\}\>$,
and find the desired pre-image of $a$ as a substitute 
$f(y)|_{y\in \phi^{-1}([X_1]\cdots [X_{k+1}])}$.
\end{proof}

In order to calculate $\dim \SGD(n)$, we need to count Poisson differential
monomials 
in $x_1,\dots, x_n$ of weight $-1$. Let us introduce the following function on $n,k\in \mathbb Z$, 
$n,k\ge 1$:
\[
\begin{gathered}
 H(n,1)=1, \\
 H(n,k) = \sum\limits_{1\le d_1<\dots <d_{k-1}\le n-1} \dfrac{1}{d_1 \dots d_{k-1}}, 
 \quad k\ge 2.
\end{gathered}
\]
In particular, $H(n,2)$ is the $n$th partial sum of the harmonic series.

\begin{corollary}
 For the operad $\SGD$, we have 
 \[
  \dim \SGD(n) = \sum\limits_{k=1}^n \dfrac{(n+k-2)!}{(k-1)!} H(n,k).
 \]
\end{corollary}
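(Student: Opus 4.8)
The plan is to reduce the computation of $\dim\SGD(n)$ to a counting problem and then evaluate the resulting sum. By Theorem~\ref{TheoremSGD}, the differential Poisson monomials of weight $-1$ span $\SGD\langle X\rangle$. These monomials form part of the standard basis of $\Pois\Der\langle X,d\rangle$ recalled above (products $[X_1]\cdots[X_k]$ with $X_1\le\dots\le X_k$ Lyndon--Shirshov words in $d^\omega X$, canonically bracketed), so they are linearly independent, hence a basis of $\SGD\langle X\rangle$. Consequently $\dim\SGD(n)$ is the number of such basic monomials that are multilinear in $x_1,\dots,x_n$.

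The first step is the weight bookkeeping. For a canonically bracketed Lyndon--Shirshov word $[X_i]$ each leaf $d^s(x)$ contributes weight $s-1$ and each of the $\ell_i-1$ internal brackets contributes $+1$ (where $\ell_i$ is the number of leaves), so $\wt([X_i])=D([X_i])-1$; summing over the commuting factors of $a=[X_1]\cdots[X_k]$ gives $\wt(a)=D(a)-k$. Thus a multilinear basic monomial has weight $-1$ exactly when its total number of derivations equals $k-1$, where $k$ is the number of Lie factors.

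Next I would count these monomials by stratifying over $k$. Such a monomial is the same data as: a partition of $\{x_1,\dots,x_n\}$ into $k$ nonempty blocks; derivation exponents $s_1,\dots,s_n\ge 0$ with $s_1+\dots+s_n=k-1$; and, for each block $B$, a canonically bracketed Lyndon--Shirshov word on the $|B|$ distinct letters $\{d^{s_i}(x_i):i\in B\}$. The number of such words equals the dimension of the multilinear component of the free Lie algebra on $|B|$ generators, namely $(|B|-1)!$, and depends only on $|B|$. The number of exponent tuples is $\binom{n+k-2}{n-1}$, independent of the partition, so
\[
\dim\SGD(n)=\sum_{k=1}^n \binom{n+k-2}{n-1}\,c(n,k),\qquad c(n,k):=\sum_{\substack{\text{partitions of }[n]\\ \text{into }k\text{ blocks}}}\ \prod_{\text{blocks }B}(|B|-1)!,
\]
and $c(n,k)$ is the number of permutations of $\{1,\dots,n\}$ with $k$ cycles (put a cyclic order on each block), i.e. the unsigned Stirling number of the first kind.

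The final step is a \emph{standard} manipulation. From $\sum_{k=1}^n c(n,k)x^k=x(x+1)\cdots(x+n-1)$ one gets $c(n,k)=[x^{k-1}]\prod_{j=1}^{n-1}(x+j)=(n-1)!\,[x^{k-1}]\prod_{j=1}^{n-1}\bigl(1+\tfrac{x}{j}\bigr)$, and the coefficient of $x^{k-1}$ in the last product is precisely $H(n,k)$ (with $H(n,1)=1$ being the constant term). Hence $c(n,k)=(n-1)!\,H(n,k)$, and since $\binom{n+k-2}{n-1}(n-1)!=(n+k-2)!/(k-1)!$, substitution yields the asserted formula. The one point needing genuine care is the very first reduction---confirming via Theorem~\ref{TheoremSGD} and the PBW-type basis of $\Pois\Der\langle X,d\rangle$ that the weight-$(-1)$ monomials form a basis rather than merely a spanning set of $\SGD\langle X\rangle$; after that the argument is routine weight accounting together with a familiar Stirling-number identity.
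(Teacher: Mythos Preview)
Your argument is correct and follows essentially the same route as the paper: stratify the weight~$-1$ multilinear basic monomials by the number $k$ of Lie factors, separate the choice of derivation exponents (a stars-and-bars count $\binom{n+k-2}{k-1}$) from the Lie-word structure, and show the latter contributes $(n-1)!\,H(n,k)$. The only difference is cosmetic: the paper computes the Lie-factor count $L(n,k)$ by a direct recursion on the factor containing the largest letter and then unwinds the recursion to $(n-1)!\,H(n,k)$, whereas you recognize $c(n,k)$ as an unsigned Stirling number of the first kind and read off $(n-1)!\,H(n,k)$ from the factorization $x(x+1)\cdots(x+n-1)=(n-1)!\,x\prod_{j=1}^{n-1}(1+x/j)$; both reach the same expression, and your shortcut is a touch cleaner.
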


\begin{proof}
Consider the set $X=\{x_1,x_2,\dots \}$ which is linearly ordered as above:
$x_1<x_2<\dots $.
Let us first calculate the number $L(n,k)$ 
of homogeneous Poisson monomials $u_1\dots u_k\in \Pois(n)$
that split into $k$ Lyndon---Shirshov factors $u_i\in \Lie\<X\>$, 
$u_1\le \dots \le u_k$ lexicographically. 
If $k=1$, 
$L(n,1)=(n-1)!$. If $k>1$ then $u_k$ starts with the greatest letter $x_n$. 
Suppose the length of $u_k$ is $d$, $1\le d\le n-1$. The number of such 
Lyndon---Shirshov words is $(d-1)!\binom{n-1}{d-1}$.
The remaining $n-d$ letters form a product of $k-1$ Lyndon---Shirshov words 
$u_1,\dots , u_{k-1}$. Hence, 
\[
 L(n,k) = \sum\limits_{d=1}^{n-k+1}(d-1)!\binom{n-1}{d-1} L(n-d, k-1)
\]
Therefore, 
\begin{multline}\nonumber
 L(n,k) \\
 = \sum\limits_{d_1,\dots, d_{k-1}} 
 \dfrac{(n-1)!(n-d_1-1)! (n-d_1-d_2-1)! \dots (n-d_1-\dots -d_{k-1}-1)!}
  {(n-d_1)!(n-d_1-d_2)! \dots (n-d_1-\dots -d_{k-1})!} \\
  =
  (n-1)! \sum\limits_{d_1,\dots , d_{k-1}} \dfrac{1}{(n-d_1)(n-d_1-d_2) \dots (n-d_1-\dots -d_{k-1})},
\end{multline}
where the sum ranges over all $(d_1,\dots , d_{k-1})$ such that 
$d_i\ge 1$ and $d_1+\dots + d_{k-1}<n$. It is easy to see that 
\[
 L(n,k)=(n-1)! H(n,k)
\]
for $n,k\ge 2$.

In order to get a differential Poisson monomial of weight $-1$, we have to apply $k-1$ derivations $d$ 
to the letters of a Poisson monomial $u_1\dots u_k$. There are $n$ letters $x_1,\dots, x_n$, 
and we may differentiate every letter more than once. Hence, the number of all possible combinations is
\[
 \binom{n+k-2}{k-1} L(n,k) = \dfrac{(n+k-2)!}{(k-1)!} H(n,k).
\]
\end{proof}

In particular, for $n=1,\dots, 7$ we have the following values:

\begin{tabular}{c|ccccccc}
 $n$ & 1 & 2 & 3 & 4 & 5 & 6 & 7 \\
 \hline 
 $\dim \SGD(n) $ & 1 & 3 & 17 & 130 & 1219 & 13391 & 167656
\end{tabular}

\section{Defining identities of special GD-algebras}\label{sec:Ident}

In this section, we describe the defining relations of the operad $\SGD $
and 
derive examples of {\em special identities}, i.e., those
identities that distinguish $\GD$ and $\SGD$. 

By definition, the operad $\SGD$ 
is the sub-operad of $\Pois \circ \GD^!$ generated by 
$ x_1x_2\otimes x_2\star x_1$ and $\{x_1,x_2\}\otimes x_1\ast x_2$.
 
\begin{proposition}\label{prop:Pois_GD!}
The variety of $(\Pois\circ \GD^!)$-algebras 
consists of linear spaces $A$ equipped with 
four bilinear operations $*$, $\circ$, $[\cdot ,\cdot ]$, and  $[\cdot \succ \cdot ]$
such that $(A, *, [\cdot , \cdot ])$ is a Poisson algebra, $(A,\circ )$ is a Novikov algebra, 
and the following identities hold:
\begin{gather}
[[x\succ  z],y] = [[z\succ  x],y] - [[x, z]\succ y] ,                                   \label{eq:PGD-1}\\
[[y\succ  x],z] = [[y\succ  z],x] + [y\succ  [x,z]] ,                                  \label{eq:PGD-2}\\
(x*y)\circ z = x*(y\circ z),                                                 \label{eq:PGD-21}\\
x\circ (y*z) = (z*x)\circ y + (y*x)\circ z,                                  \label{eq:PGD-22}\\
[(x*z)\succ y]) = [(x\circ z),y] +[(z\circ x),y],                        \label{eq:PGD-3}\\ 
[x,z]\circ y =  [y\succ  x]*z - [(z\circ y),x],                          \label{eq:PGD-4}\\
y\circ [x,z] =[x\succ  z]*y  - [z\succ  x]*y,                                \label{eq:PGD-5}\\
[y\succ (x*z)] = [(x\circ y),z] + [(z\circ y),x],                       \label{eq:PGD-6}\\  
[y\succ z]\circ x + [x\succ z]\circ y = [x\succ (z\circ y)] + [y\succ (z\circ x)],                                  \label{eq:PGD-7}
\end{gather}
\begin{multline}                                   \label{eq:PGD-8}
[(y\circ z)\succ x] + [(x\circ z)\succ y] = x\circ [z\succ y] +  [z\succ y]\circ x   \\
+ [x\succ (y\circ z)] + y\circ [z\succ x] + [z\succ x]\circ y + [y\succ (x\circ z)] .
\end{multline}
\end{proposition}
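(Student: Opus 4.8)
The plan is to recognize $\Pois\circ\GD^!$ as a quadratic operad and then determine its generators and its space of ternary relations. Quadraticity is immediate: $\Pois$ is quadratic (commutativity and associativity of $*$, antisymmetry and Jacobi for $[\cdot,\cdot]$, and the Leibniz rule all have degree $\le 3$), $\GD$ is quadratic as observed in Section~\ref{sec:DualGD}, hence $\GD^!$ is quadratic, and by \cite{GK94} the Manin white product of quadratic operads is quadratic. The generating $S_2$-module is $\Pois(2)\otimes\GD^!(2)$, of dimension $2\cdot 3=6$; decomposing it under $S_2$ one recovers exactly the four operations of the statement: $*$ (symmetric), $[\cdot,\cdot]$ (antisymmetric), and $\circ$, $[\cdot\succ\cdot]$ without any symmetry (two copies of the regular representation), the sign twist in the last two accounting for the relation $[x\prec y]=-[y\succ x]$ and for the placement of $d$ in $\circ$.

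Next I would show that the stated identities really are relations of $\Pois\circ\GD^!$, using the identification $\Pois\circ\GD^!=D^*\Pois$ furnished by Theorems~\ref{thm:WhiteGD->GenDerived} and~\ref{thm:ConverseGD} (in the several-operations form pointed out in the remark following Theorem~\ref{thm:ConverseNov}): the relations of $\Pois\circ\GD^!$ are precisely the generalized derived identities of $\Pois$. Under the corresponding functor a differential Poisson algebra $A$ with derivation $d$ yields $*=\cdot$, $[\cdot,\cdot]=\{\cdot,\cdot\}$, the Novikov product $\circ$ arising from the commutative product via $d$ (so $x\circ y = x\,d(y)$), and $[\cdot\succ\cdot]$ arising from the Lie bracket via $d$. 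One then checks that $(A,*,[\cdot,\cdot])$ is Poisson, $(A,\circ)$ is Novikov, and that \eqref{eq:PGD-1}--\eqref{eq:PGD-8} hold, by direct substitution inside an arbitrary differential Poisson algebra, exactly in the manner of the proof of Theorem~\ref{thm:th1}. This establishes that every identity in the statement is a relation of $\Pois\circ\GD^!$.

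For completeness --- that these relations generate all of them --- I would use quadraticity to reduce the problem to degree $3$: it suffices to see there is no further ternary relation. Concretely, compute the degree-$3$ relation space of $\Pois\circ\GD^!$ as the preimage under the canonical inclusion $\Psi:\Gamma(\Pois(2)\otimes\GD^!(2))(3)\hookrightarrow\Gamma(\Pois(2))(3)\otimes\Gamma(\GD^!(2))(3)$ of $R_\Pois\otimes\Gamma(\GD^!(2))(3)+\Gamma(\Pois(2))(3)\otimes R_{\GD^!}$, where $R_\Pois$ is the usual space of Poisson relations and $R_{\GD^!}$ is read off from the sign-twisted orthogonal complement computed in Section~\ref{sec:DualGD}; this is a finite linear-algebra task, of the same nature as Example~\ref{exmp:As-Nov} but larger, and its outcome must be shown to be the $S_3$-submodule generated by the Poisson relations, \eqref{eq:LSymm}, \eqref{eq:RComm}, and \eqref{eq:PGD-1}--\eqref{eq:PGD-8}. (Equivalently, and how I would actually organize it: map the degree-$3$ part of the free algebra on $\{*,[\cdot,\cdot],\circ,[\cdot\succ\cdot]\}$ into the finite-dimensional space of multilinear degree-$3$ differential Poisson polynomials via $\circ\mapsto x\,d(y)$ and so on, and compute the kernel.) A dimension count then finishes: the operad $\mathcal O$ presented by the generators and relations of the statement surjects onto $\Pois\circ\GD^!$ by the identity on the common generating module, so $R_{\mathcal O}\subseteq R_{\Pois\circ\GD^!}$ in degree $3$, and the equality $\dim\mathcal O(3)=\dim(\Pois\circ\GD^!)(3)$ forces these two relation spaces to coincide, hence $\mathcal O=\Pois\circ\GD^!$; the number $\dim(\Pois\circ\GD^!)(3)$ may be obtained either from the formula above or from the realization $\Pois\<X\>\otimes\GD^!\<X\>$ together with the explicit basis of $\GD^!\<X\>$ from Corollary~\ref{eq:ComDerWeight}.

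The hardest part is this last step. The ambient space $\Gamma(\Pois(2)\otimes\GD^!(2))(3)$ is $108$-dimensional with a nontrivial $S_3$-action, and the relation subspace is of dimension on the order of fifty; the real difficulty is not feasibility but packaging --- extracting from a large, unwieldy spanning set the short human-readable list \eqref{eq:PGD-1}--\eqref{eq:PGD-8}, and keeping the $S_2$- and $S_3$-module bookkeeping together with the identifications of $\circ$ and $[\cdot\succ\cdot]$ and with the embedding $\Psi$ perfectly consistent throughout.
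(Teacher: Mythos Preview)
Your proposal is correct and follows essentially the same route as the paper: identify the four generators of $\Pois(2)\otimes\GD^!(2)$ explicitly as tensor products and then solve the resulting finite linear-algebra problem in degree~$3$, exactly as in Examples~\ref{exmp:Lie-Nov} and~\ref{exmp:As-Nov}. The paper's proof is simply a one-line reference to that computation, with the generators written as
\[
x_1*x_2=x_1x_2\otimes x_1*x_2,\quad x_1\circ x_2=x_1x_2\otimes x_2\star x_1,\quad [x_1,x_2]=\{x_1,x_2\}\otimes x_1*x_2,\quad [x_1\succ x_2]=\{x_1,x_2\}\otimes x_1\star x_2.
\]
Your extra organizational layer---first checking that the listed identities are genuine relations by passing through the identification $\Pois\circ\GD^! = D^*\Pois$ of Theorems~\ref{thm:WhiteGD->GenDerived}--\ref{thm:ConverseGD} and substituting inside a differential Poisson algebra, and only then matching dimensions---is a legitimate and arguably cleaner way to package the same computation; the paper does not separate these steps and simply reports the outcome of the direct intersection calculation.
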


\begin{proof}
 Apply the computation process similar to that in Examples \ref{exmp:Lie-Nov}, \ref{exmp:As-Nov}
 relative to the following notations for the generators of 
 $\Pois \circ \GD^!$ in $\Pois \otimes \GD^!$:
\[
\begin{gathered}
x_1*x_2=x_1x_2\otimes x_1*x_2, \quad 
x_1\circ x_2 = x_1x_2\otimes x_2\star x_1, \\
[x_1,x_2] = \{x_1,x_2\}\otimes x_1*x_2, \quad 
[x_1\succ x_2] = \{x_1,x_2\}\otimes x_1\star x_2 .
\end{gathered}
\]
Straightforward solution of the corresponding linear algebra problem leads to
the required identities.
\end{proof}

\begin{corollary}\label{cor:SGD_ident}
The T-ideal $T_{\SGD}(X)$ coincides with the intersection 
of $T_{\Pois\circ \GD^!}(X)$ with the space of 
nonassociative polynomials on operations $\circ $ and $[\cdot ,\cdot ]$.
In other words, to get the defining relations of the operad $\SGD $  it is enough to find all those corollaries 
of the identities stated in Proposition \ref{prop:Pois_GD!}
that contain only the operations $\circ $ and $[\cdot, \cdot ]$.
\end{corollary}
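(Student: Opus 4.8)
The statement is a formal consequence of the constructions already in place; the plan is merely to align the right chain of ``same identities'' reductions. First I would use the description of $\SGD$ from the beginning of this section: the variety $\SGD$ is generated by the special $\GD$-algebras, i.e.\ by the subalgebras of $V^{(\delta)}$ over all differential Poisson algebras $V\in\Pois\Der$. Since passing to subalgebras, homomorphic images and Cartesian products can only enlarge the set of satisfied identities, $T_{\SGD}(X)$ is exactly the set of non-associative polynomials $f$ in the two operations $\circ$ and $[\cdot,\cdot]$ that vanish on $V^{(\delta)}$ for every $V\in\Pois\Der$.

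On the other side I would invoke Section~\ref{sec:Derived}: by Theorems~\ref{thm:WhiteGD->GenDerived} and \ref{thm:ConverseGD}, taken in the multi-operation form pointed out in the remark closing that section, the variety governed by $\Pois\circ\GD^!$ is generated by the generalized derived algebras $V^{(d*)}$, $V\in\Pois\Der$. Hence $T_{\Pois\circ\GD^!}(X)$ is the set of all non-associative polynomials in the four operations $*$, $\circ$, $[\cdot,\cdot]$, $[\cdot\succ\cdot]$ vanishing on all such $V^{(d*)}$.

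The bridge is the elementary observation that for every $V\in\Pois\Der$ the $\GD$-algebra $V^{(\delta)}$ is just $V^{(d*)}$ with the operations $*$ and $[\cdot\succ\cdot]$ forgotten --- the products $x\circ y=xd(y)$ and $[x,y]=\{x,y\}$ being literally the same maps on $V$ in either system. Therefore a polynomial written using only $\circ$ and $[\cdot,\cdot]$ vanishes on all $V^{(\delta)}$ if and only if it vanishes on all $V^{(d*)}$, i.e.\ if and only if it lies in $T_{\Pois\circ\GD^!}(X)$. Comparing with the first paragraph gives $T_{\SGD}(X)=T_{\Pois\circ\GD^!}(X)\cap S$, where $S$ is the subspace of non-associative polynomials in $\circ$ and $[\cdot,\cdot]$. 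The ``in other words'' reformulation is then immediate: by Proposition~\ref{prop:Pois_GD!}, the T-ideal $T_{\Pois\circ\GD^!}(X)$ is generated, as a T-ideal, by the Poisson axioms for $(*,[\cdot,\cdot])$, the Novikov axioms for $\circ$, and the identities \eqref{eq:PGD-1}--\eqref{eq:PGD-8}, so $T_{\SGD}(X)$ consists precisely of the consequences of that list lying in $S$.

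I do not expect a genuine difficulty, as the real content has been absorbed into Proposition~\ref{prop:Pois_GD!} and the theorems of Section~\ref{sec:Derived}. The one point I would verify carefully is the bookkeeping in the third paragraph --- that the two Poisson products, after commutativity of $*$ collapses $d(a)*b$ with $b*d(a)$ and anticommutativity of $\{\cdot,\cdot\}$ collapses $\{a,d(b)\}$ with $-\{d(b),a\}$, yield exactly the four independent generators $*$, $\circ$, $[\cdot,\cdot]$, $[\cdot\succ\cdot]$ of Proposition~\ref{prop:Pois_GD!}, with the morphism $\delta$ selecting the pair $\{\circ,[\cdot,\cdot]\}$.
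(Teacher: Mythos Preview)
Your argument is correct and is exactly the reasoning the paper has in mind. The paper does not give an explicit proof of this corollary: it simply records, just before Proposition~\ref{prop:Pois_GD!}, that ``by definition, the operad $\SGD$ is the sub-operad of $\Pois\circ\GD^!$ generated by $x_1x_2\otimes x_2\star x_1$ and $\{x_1,x_2\}\otimes x_1\ast x_2$'', and then states the corollary as an immediate consequence. Your three paragraphs spell out precisely why that ``by definition'' sentence is justified (via Theorems~\ref{thm:WhiteGD->GenDerived} and~\ref{thm:ConverseGD}) and why the corollary follows.
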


According to Corollary \ref{cor:SGD_ident}, defining identities of $\SGD $ 
may be derived from the identities of $\Pois \circ \GD^!$ given by Proposition \ref{prop:Pois_GD!}.
For example, we may use \eqref{eq:PGD-4} to eliminate the operation $\succ $ in  \eqref{eq:PGD-5}. 
As a result, we obtain \eqref{eq:GD-1}.

Finding the complete list of defining relations for the operad $\SGD $ seems to be a hard problem.
We will deduce some of these relations below.

Consider the monomial $[y\succ x]*(z\circ u)$. Apply \eqref{eq:PGD-21} and then \eqref{eq:PGD-4} to get
\[
 [y\succ x]*(z\circ u) = ([y\succ x]*z)\circ u = ([x,z]\circ y + [(z\circ y),x])\circ u.
\]
On the other hand, apply \eqref{eq:PGD-4} to get 
\[
 [y\succ x]*(z\circ u) = [x,(z\circ u)]\circ y + [((z\circ u)\circ y), x].
\]
Therefore, the following identity holds on $\SGD $-algebras:
\begin{equation}\label{eq:S-ident}
    [x_1,x_3\circ x_2]\circ x_4 + ([x_3,x_1]\circ x_2)\circ x_4
    =
    [x_1,(x_3\circ x_4)\circ x_2] + [x_3\circ x_4, x_1]\circ x_2.
\end{equation}
(It is also easy to check in a straightforward way that the corresponding 
identity holds on differential Poisson algebras.)

\begin{proposition}\label{prop:S-ident1}
The identity \eqref{eq:S-ident}
does not hold on the class of all $\GD $-algebras. 
\end{proposition}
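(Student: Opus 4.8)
The plan is to exhibit a concrete Gelfand--Dorfman algebra on which the identity \eqref{eq:S-ident} fails. Since \eqref{eq:S-ident} has degree $4$, it suffices to work in a small-dimensional $\GD$-algebra, and the most economical source of such examples is Theorem~\ref{thm:th1}: take a Poisson algebra $V$ with a derivation $d$ and equip it with $x\circ y = xd(y)$, $[x,y]=\{x,y\}$. Any algebra of this form is automatically a $\GD$-algebra, so if \eqref{eq:S-ident} fails on it, it fails on the variety $\GD$. The subtlety is that such algebras are by construction \emph{special}, hence satisfy \eqref{eq:S-ident}; so this route cannot work directly, and I would instead look for a genuinely non-special $\GD$-algebra, or — more simply — verify \eqref{eq:S-ident} symbolically inside the free $\GD$-algebra and display a nonzero residue.

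The cleanest approach I would actually carry out is the second one: work in the multilinear component $\GD(4)$ and show that the multilinear consequence of \eqref{eq:S-ident} is \emph{not} already a consequence of the defining identities \eqref{eq:LSymm}, \eqref{eq:RComm}, \eqref{eq:GD-1} together with the Jacobi and anticommutativity identities for $[\cdot,\cdot]$. Concretely, I would (i) rewrite \eqref{eq:S-ident} as a single multilinear element $g(x_1,x_2,x_3,x_4)\in M_4(X)$ in the two operations $\circ$ and $[\cdot,\cdot]$; (ii) compute the $S_4$-submodule of $M_4(X)$ spanned by all substitution instances of the degree-$\le 3$ defining relations of $\GD$ composed with one more variable (this is the degree-$4$ part of the $\GD$-operadic ideal); and (iii) check that $g$ does not lie in that submodule. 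Step (iii) is a finite-dimensional linear algebra computation — the ambient space $M_4(X)$ together with both operations is of manageable size — and a single rank computation settles it.

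Alternatively, and perhaps more transparently for the reader, I would produce an explicit small $\GD$-algebra. A natural candidate is a low-dimensional Novikov algebra $N$ (for instance a $2$- or $3$-dimensional one from the known classification) equipped with the \emph{zero} Lie bracket, which trivially satisfies \eqref{eq:GD-1}; but with $[\cdot,\cdot]=0$ the identity \eqref{eq:S-ident} collapses, so this is useless. One therefore needs a $\GD$-algebra with a nontrivial bracket that is not of the differential-Poisson form. I expect the smallest such example to be $3$- or $4$-dimensional, and I would search among GD-algebras built on the classified low-dimensional Novikov algebras by adjoining a compatible Lie bracket satisfying \eqref{eq:GD-1}; once such a structure is fixed, plugging basis vectors into \eqref{eq:S-ident} and getting a nonzero value is a one-line check.

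The main obstacle is locating the witness: the identity \eqref{eq:S-ident} was \emph{derived} precisely as a consequence of passing through $\Pois\circ\GD^!$, so every obvious example (anything coming from Theorem~\ref{thm:th1}) satisfies it. The real content of the proposition is that $\GD$ is strictly larger than $\SGD$ at degree $4$, and verifying this requires either the operadic rank computation of step (iii) above or the explicit construction of a non-special $\GD$-algebra. I would lead with the rank computation since it is self-contained and leaves no room for arithmetic slips, and, if a compact explicit counterexample emerges from the linear algebra (the cokernel vector can typically be realized in a $4$-dimensional algebra), I would record that as a remark.
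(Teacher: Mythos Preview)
Your plan is sound: the rank computation in $\GD(4)$ that you outline is exactly the right finite-dimensional linear-algebra test, and it would settle the proposition once carried out. You are also right to discard any example built via Theorem~\ref{thm:th1}.

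The paper, however, takes a different route. Rather than working inside the free $\GD$-algebra, it fixes a Lie algebra $\mathfrak g$ and treats the set of all Novikov products on $\mathfrak g$ satisfying \eqref{eq:GD-1} as an affine variety $\mathcal V(\mathfrak g)\subset\Bbbk^{n^3}$ cut out by the ideal $I(\mathfrak g)$ generated by the structure-constant versions of \eqref{eq:LSymm}--\eqref{eq:GD-1}. The identity \eqref{eq:S-ident} then corresponds to a family of polynomials in the $a^{ij}_k$, and by the Nullstellensatz it holds on every $\GD$-structure over $\mathfrak g$ iff those polynomials lie in $\sqrt{I(\mathfrak g)}$. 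A radical computation in \textsc{Singular} shows this membership holds for every $2$-dimensional $\mathfrak g$ but fails for the $3$-dimensional Heisenberg algebra $\mathfrak h_3$.

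What each approach buys: your operadic rank test is the more direct and self-contained verification that \eqref{eq:S-ident}\,$\notin T_{\GD}$, and it is what one would normally do. The paper's geometric detour yields finer information --- it locates where counterexamples live (over $\mathfrak h_3$, not over any $2$-dimensional Lie algebra) --- at the cost of a heavier computer-algebra step (radical of a polynomial ideal rather than the rank of a linear map). Your proposed ``explicit small algebra'' search would in effect be looking for a $\Bbbk$-point of $\mathcal V(\mathfrak h_3)$ on which \eqref{eq:S-ident} is nonzero, so the two viewpoints meet there.
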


\begin{proof}
Without loss of generality 
we may assume the base field $\Bbbk $ is algebraically closed.
We will find an example of a $\GD $-algebra which does not satisfy \eqref{eq:S-ident}
by means of algebraic geometry arguments using the approach of \cite{Shaf}. 

Let us fix a structure of a Lie algebra $\mathfrak g$ on a $n$-dimensional linear space
spanned by $x_1,\dots, x_n$. Namely, we fix a multiplication table of this 
Lie algebra 
\[
[x_i,x_j] = \sum\limits_{k=1}^n c^{ij}_k x_k,\quad c^{ij}_k\in \Bbbk .
\]
Consider the class of all Novikov products $\circ $ on the space 
$\mathfrak g$
satisfying \eqref{eq:GD-1} as a Zariski closed 
set $\mathcal V(\mathfrak g)$ in the $n^3$-dimensional affine space assuming 
\[
x_i\circ x_j = \sum\limits_{k=1}^n a^{ij}_k x_k,\quad a^{ij}_k\in \Bbbk .
\]
A point $(a^{ij}_k)_{i,j,k=1,\dots , n} \in \Bbbk^{n^3}$ belongs to 
$\mathcal V(\mathfrak g)$
if and only if 
\begin{gather}
\sum\limits_{k=1}^n \big (a^{ij}_k a^{kp}_{l} - a^{ip}_{k} a^{kj}_{l} \big )=0,
  \label{eq:VarGD1} \\
\sum\limits_{k=1}^n \big ( 
a^{ij}_k a^{kp}_l - a^{ik}_l a^{jp}_k - a^{ji}_k a^{kp}_l + a^{jk}_l a^{ip}_k \big) = 0,
  \label{eq:VarGD2} \\
\sum\limits_{k=1}^n \big (
 c^{ik}_l a^{jp}_k - c^{pk}_l a^{ji}_k + c^{ji}_k a^{kp}_l - c^{jp}_k a^{ki}_l - c^{ip}_k a^{jk}_l 
\big) = 0
   \label{eq:VarGD3}
\end{gather}
for all $i,j,p,l=1,\dots, n$.
These relations represent the defining identities of GD-algebras.

Denote by $I(\mathfrak g)$ the ideal in $\Bbbk[a^{ij}_k \mid i,j,k=1,\dots , n]$
generated by the left-hand sides of \eqref{eq:VarGD1}--\eqref{eq:VarGD3}.
If we assume that \eqref{eq:S-ident} holds for all GD-algebras in the class 
$\mathcal V(\mathfrak g)$ then the corresponding polynomials
\begin{equation}\label{eq:S-identCoord}
    \sum\limits_{k,l=1}^n \big (c^{ik}_l a^{jp}_k a^{lq}_s - c^{ij}_l a^{lp}_k a^{lq}_s - c^{il}_s a^{jq}_k a^{kp}_l + c^{il}_k a^{jq}_l a^{kp}_s \big ),
    \quad 
    i,j,p,q,s=1,\dots, n,
\end{equation}
belong to the radical of $I(\mathfrak g)$.

For dimensions $n=2,3$ it is possible to compute the radical 
of $I(\mathfrak g)$ in a straightforward way by means of the computer 
algebra system Singular \cite{Sing}. These computations show 
that for every 2-dimensional Lie algebra $\mathfrak g$ all polynomials 
\eqref{eq:S-identCoord} belong to $\sqrt{I(\mathfrak g)}$.

However, even for the 3-dimensional Heisenberg algebra $\mathfrak h_3$
spanned by $x_1,x_2,x_3$, $[x_1,x_2]=x_3$, $x_3$ is central, 
there exist polynomials of the form \eqref{eq:S-identCoord} that do not 
belong to $\sqrt{I(\mathfrak h_3)}$. 
\end{proof}

Let us find one more (independent) special identity for $\GD $-algebras. 
Consider the expression 
\[
 ([x\succ (z\circ y)]+[y\succ (z\circ x)])*u. 
\]
Apply \eqref{eq:PGD-7} first, then \eqref{eq:PGD-21} and commutativity of $*$:
\begin{multline}\nonumber
 ([x\succ (z\circ y)]+[y\succ (z\circ x)])*u = ([x\succ z]\circ y+[y\succ z]\circ x )*u \\
 =([x\succ z]*u)\circ y + ([y\succ z]*u)\circ x. 
\end{multline}
Relation \eqref{eq:PGD-4} and right commutativity of $\circ $ imply
\begin{multline}\nonumber
 ([x\succ (z\circ y)]+[y\succ (z\circ x)])*u =
 2([x, u]\circ x)\circ y + [(u\circ x), z]\circ y + [(u\circ y), z]\circ x .
\end{multline} 
On the other hand, it follows from \eqref{eq:PGD-4} that 
\begin{multline}\nonumber 
 ([x\succ (z\circ y)]+[y\succ (z\circ x)])*u \\
 = [(z\circ y), u]\circ x + [(u\circ x), (z\circ y)] + [(z\circ x), u]\circ y + [(u\circ y), (z\circ x)] .
\end{multline}
Finally, using \eqref{eq:S-ident} we obtain
\begin{multline}\label{eq:S-ident2}
 2[x_4, (x_3\circ x_2)\circ x_1] = [x_4\circ x_1, x_3\circ x_2] + [x_4\circ x_2, x_3\circ x_1] \\
 +[x_4, x_3\circ x_2]\circ x_1+[x_4, x_3\circ x_1]\circ x_2 + [x_3,x_4\circ x_1]\circ x_2+ [x_3,x_4\circ x_2]\circ x_1.
\end{multline}

\begin{proposition}\label{prop:S-ident2}
 The identity \eqref{eq:S-ident2} does not hold on the class of all $\GD$-algebras 
 satisfying \eqref{eq:S-ident}.
\end{proposition}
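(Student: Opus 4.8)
The argument parallels the proof of Proposition~\ref{prop:S-ident1}, and as there we may assume the base field $\Bbbk$ is algebraically closed. The plan is to reduce the statement to a question about membership in the radical of an explicit polynomial ideal and to settle that question by a computation for a suitably chosen small Lie algebra.

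First I would fix a Lie algebra structure $\mathfrak g$ on the $n$-dimensional space spanned by $x_1,\dots,x_n$, with structure constants $c^{ij}_k$, and parametrize the Novikov products $\circ$ on $\mathfrak g$ that are compatible with $[\cdot,\cdot]$ (that is, satisfy \eqref{eq:GD-1}) by the Zariski closed set $\mathcal V(\mathfrak g)\subset\Bbbk^{n^3}$ cut out by \eqref{eq:VarGD1}--\eqref{eq:VarGD3} in the variables $a^{ij}_k$, exactly as in the proof of Proposition~\ref{prop:S-ident1}. The new ingredient is to pass to the closed subset $\mathcal V'(\mathfrak g)\subseteq\mathcal V(\mathfrak g)$ of those products which in addition satisfy the special identity \eqref{eq:S-ident}. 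In coordinates \eqref{eq:S-ident} is expressed by the polynomials \eqref{eq:S-identCoord}, so $\mathcal V'(\mathfrak g)$ is the vanishing locus of the ideal $J(\mathfrak g)$ obtained by adjoining these polynomials to the ideal $I(\mathfrak g)$ introduced in the proof of Proposition~\ref{prop:S-ident1}.

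Next I would expand the difference of the two sides of \eqref{eq:S-ident2} in the chosen basis, obtaining a family of polynomials $g_\alpha$ in the variables $a^{ij}_k$ (of the same shape as \eqref{eq:S-identCoord}: quadratic in the $a$'s and linear in the $c$'s), one for each choice of four input indices and an output component $\alpha$. Since \eqref{eq:S-ident2} involves only the operations $\circ$ and $[\cdot,\cdot]$, it is a legitimate candidate identity for GD-algebras, and by the Nullstellensatz it holds on every GD-algebra arising from a point of $\mathcal V'(\mathfrak g)$ if and only if all the $g_\alpha$ lie in $\sqrt{J(\mathfrak g)}$. Hence it suffices to produce a single Lie algebra $\mathfrak g$ and an index tuple $\alpha$ with $g_\alpha\notin\sqrt{J(\mathfrak g)}$, equivalently, a point of $\mathcal V'(\mathfrak g)$ at which \eqref{eq:S-ident2} fails. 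I would first test the $3$-dimensional Heisenberg algebra $\mathfrak h_3$, which already separated $\GD$ from \eqref{eq:S-ident} in Proposition~\ref{prop:S-ident1}; should \eqref{eq:S-ident2} turn out to be a consequence of the remaining relations over $\mathfrak h_3$, I would move to a $4$-dimensional nilpotent Lie algebra. The radical of $J(\mathfrak g)$ and the ideal-membership tests are carried out with the computer algebra system Singular~\cite{Sing}, as in the previous proof.

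The main obstacle is purely computational: $J(\mathfrak g)$ lives in $n^3$ variables and its radical grows quickly with $n$. The two delicate choices are therefore (i) picking $\mathfrak g$ small enough that the computation terminates yet not so constrained that \eqref{eq:S-ident2} is automatically forced, and (ii) if necessary, cutting down the search by specializing a well-chosen collection of the $a^{ij}_k$ to zero (or to fixed values), passing to an affine subspace on which \eqref{eq:S-ident} is still satisfiable while \eqref{eq:S-ident2} is not; this lowers the number of variables while still yielding an honest GD-algebra. Once an explicit point of $\mathcal V'(\mathfrak g)$ violating \eqref{eq:S-ident2} has been exhibited, the failure can be re-verified by a direct finite computation, so the final argument need not rely on the computer.
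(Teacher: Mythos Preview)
Your strategy is sound in principle and, if carried to completion, would yield a proof: adjoin the coordinate polynomials of \eqref{eq:S-ident} to $I(\mathfrak g)$, compute the radical, and test the coordinate polynomials of \eqref{eq:S-ident2} for membership. But as written this is a plan rather than a proof---you have not exhibited a witness, and you explicitly leave open which $\mathfrak g$ will work and whether the computation terminates. Until an actual point of $\mathcal V'(\mathfrak g)$ violating \eqref{eq:S-ident2} is produced (or a non-membership certificate is displayed), the argument is incomplete.

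The paper avoids the radical computation entirely by working not over a fixed Lie algebra but in a carefully chosen \emph{sub-variety} of $\GD$ defined by the nilpotency-type identities
\[
(x_1\circ x_2)\circ x_3 = x_1\circ(x_2\circ x_3)=0,\quad [x_1,x_2]\circ x_3=0,\quad [x_1,[x_2,x_3]]=0,
\]
together with the single surviving consequence of \eqref{eq:GD-1}. In this class all terms of \eqref{eq:S-ident} vanish identically, so \eqref{eq:S-ident} holds for free; meanwhile \eqref{eq:S-ident2} collapses (after one more use of the defining relations, assuming $\mathrm{char}\,\Bbbk\neq 2$) to the single identity $x_1\circ[x_2\circ x_3,x_4]=0$. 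One then checks by hand, in the free algebra of this small variety, that this relation is \emph{not} a consequence of the defining list in the relevant multidegree (two $\circ$'s, one bracket). This is a finite combinatorial check in a handful of monomials rather than a Gr\"obner/radical computation in $n^3$ variables.

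The trade-off: your approach, once completed, would produce an explicit finite-dimensional counterexample and is mechanical; the paper's approach is computer-free and conceptually cleaner, but requires the insight of which auxiliary identities to impose so that \eqref{eq:S-ident} trivializes while \eqref{eq:S-ident2} does not.
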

 
\begin{proof}
Let us consider the class of algebras 
$(V,\circ, [\cdot ,\cdot ])$ satisfying the following identities:
\begin{equation}\label{eq:GD-3nilp}
\begin{gathered}
x_1\circ (x_2\circ x_3)=(x_1\circ x_2)\circ x_3=0,\\
[x_1,x_2]\circ x_3= 0, \\
[x_1,x_2\circ x_3] - [x_3,x_2\circ x_1] = x_2\circ [x_1,x_3], \\
[x_1,x_2]=-[x_2,x_1], \\
[x_1,[x_2,x_3]]=0.
\end{gathered}
\end{equation}
Obviously, all these algebras belong to $\GD $ and 
 \eqref{eq:S-ident} holds.
If we assume that  \eqref{eq:S-ident2} 
holds on all these algebras 
then 
\begin{equation}\label{eq:S2-nilp}
[x_1\circ x_2, x_3\circ x_4]  + [x_1\circ x_4, x_3\circ x_2] = 0    
\end{equation}
has to be a corollary 
of \eqref{eq:GD-3nilp}.
If $\mathrm{char}\,\Bbbk \ne 2$, relation \eqref{eq:S2-nilp} may be transformed to 
\begin{equation}\label{eq:S2-nilp2}
x_1\circ [x_2\circ x_3, x_4]=0
\end{equation}
modulo \eqref{eq:GD-3nilp}.
However, if we write down all those corollaries of 
\eqref{eq:GD-3nilp} that contain two operations $\circ $ and one operation $[\cdot,\cdot ]$
then we see that \eqref{eq:S2-nilp2} does not belong to this list. 
\end{proof}
 
\begin{remark}
 We do not suppose that \eqref{eq:S-ident} and \eqref{eq:S-ident2} exhaust all special identities 
 of $\GD$-algebras. Finding the complete list of special identities (or at least determining whether it is finite)
 is an interesting problem.
\end{remark}

\begin{remark}
If $(A,\circ )$ is a Novikov algebra then 
$(A,\circ, [\cdot, \cdot ])$ with 
\[
[a,b]=a\circ b - b\circ a
\]
is a GD-algebra \cite{GD83}. It is easy to check that \eqref{eq:S-ident} and \eqref{eq:S-ident2}
hold for such a GD-algebra. We conjecture that all
GD-algebras obtained in this way are special.
\end{remark}

\section*{Acknowledgements}
The first author was supported by the Program of fundamental scientific researches of the Siberian Branch of 
Russian Academy of Sciences, I.1.1, project 0314-2016-0001.


\begin{thebibliography}{99}

\bibitem{BDK}
B. Bakalov, A. D’Andrea, V. G. Kac,
Theory of finite pseudoalgebras, 
Adv. Math. {\bf 162} (2001), 1--140.

\bibitem{BN85}
A. A. Balinskii, S. P. Novikov, 
Poisson brackets of hydrodynamic type, Frobenius algebras and Lie algebras, 
Sov. Math. Dokl. {\bf 32} (1985), 228--231.

\bibitem{BC_BMS}
L. A. Bokut, Y.-Q. Chen,
Gr\"obner--Shirshov bases and their calculation,
Bull. Math. Sci. 4 (2014), 325--395. 


\bibitem{BCZ-NP}
L. A. Bokut, Y. Chen, Z. Zhang,
On free Gelfand--Dorfman--Novikov--Poisson algebras and a PBW theorem,
J. Algebra {\bf 500} (2018), 153--170.

\bibitem{CFL}
K.-T. Chen, R. H. Fox, R. C. Lyndon, 
Free differential calculus. IV. The quotient groups of the lower central series, 
Ann. Math., {\bf 68} (1958), 81--95.

\bibitem{Sing} W. Decker, G.-M. Greuel, G. Pfister, H. Sch\"onemann, 
Singular 3-1-3---A computer algebra system
for polynomial computations, 
url: http://www.singular.uni-kl.de (2011).

\bibitem{DzhLowf} 
A. S. Dzhumadil’daev, C. L\"ofwall, 
Trees, free right-symmetric algebras, free Novikov algebras and identities, 
Homology, Homotopy Appl. {\bf 4} (2) (2002), 165--190.

\bibitem{GuoKeig08}
Li Guo, W. Keigher,
On differential Rota-Baxter algebras,
J. Pure Appl. Algebra {\bf 212} (2008), no.~3, 522--540. 

\bibitem{GD83} 
I.~M.~Gelfand, I.~Ya.~Dorfman, 
Hamilton operators and associated algebraic structures, 
Functional analysis and its application {\bf 13} (1979), no.~4, 13--30.

\bibitem{GK94}
V. Ginzburg, M. Kapranov,
Koszul duality for operads,
Duke Math. J. {\bf 76} (1994), no.~1, 203--272.

\bibitem{HongWu17}
Y. Hong, Z. Wu,
Simplicity of quadratic Lie conformal algebras,
Comm. Algebra {\bf 45} (2017), no. 1, 141--150. 

\bibitem{MikhSh10}
A. A. Mikhalev, I. P. Shestakov, 
PBW-pairs of varieties of linear algebras,
Comm. Algebra {\bf 42} (2014), 667--687.

\bibitem{Kac1998}
V. G. Kac, 
Vertex Algebras for Beginners. University Lecture Series, {\bf 10}, 2nd edn. 
American Mathematical Society, Providence, 1996 (1998).

\bibitem{Kol:PoisPrepr}
P. S. Kolesnikov, 
Universal enveloping Poisson conformal algebras,
ArXiv: arXiv:1902.03001.

\bibitem{Lod10}
J.-L. Loday,
On the operad of associative algebras with derivation,
Georgian Math. J. {\bf 17} (2010), no. 2, 347--372. 

\bibitem{LodVal08}
J.-L.~Loday, B.~Vallette,
 Algebraic Operads, 
Gr\"undlehren der mathematischen Wissenschaften {\bf 346}, 
Springer-Verlag, 2012.

\bibitem{Osborn}
J. M. Osborn, 
Novikov algebras, 
Nova J. Algebra \& Geom. {\bf 1} (1992), 1--14.

\bibitem{Shaf}
I. R. Shafarevich, 
Degeneration of semisimple algebras,
Comm. Algebra {\bf 29} (2001), no.~9, 3943--3960. 

\bibitem{Sh58}
A. I. Shirshov, 
On free Lie rings, 
Mat. Sb., 45(87) (1958), 113--122.

\bibitem{Xu1997}
X. Xu, 
Novikov--Poisson algebras, 
J. Algebra {\bf 190} (1997), 253--279.

\bibitem{Xu2000} 
X. Xu, 
Quadratic Conformal Superalgebras, 
J. Algebra {\bf 231} (2000), 1--38.



\end{thebibliography}
\end{document}